\theoremstyle{plain}
\newtheorem{theo}{Theorem}[section]
\newtheorem{prop}[theo]{Proposition}
\newtheorem{definition}[theo]{Definition}
\theoremstyle{remark}
\newtheorem{rem}[theo]{Remark}
\newcommand{\RR}{{\mathbb{R}}}
\newcommand{\CC}{{\mathbb C}}
\newcommand{\II}{{\mathbb I}}
\newcommand{\der}{\partial}
\newcommand{\ov}{\overline}
\newcommand{\om} {\Omega}
\newcommand{\nabhat}{{\widehat{\nabla}}}
\newcommand{\R}{\mathbb{R}}
\begin{document}

\title{Lipschitz continuous dependence of piecewise constant Lam\'e coefficients {}from boundary data: the case of non flat interfaces
 }
\author{Elena Beretta\thanks{Politecnico di Milano, Italy
(elena.beretta@polimi.it)} \and Elisa Francini\thanks{Universit\`{a} degli Studi di Firenze, Italy (elisa.francini@unifi.it)}\and
Antonino Morassi\thanks{
Universit\`{a} degli Studi di Udine, Italy (antonino.morassi@uniud.it)}\and
Edi Rosset\thanks{
Universit\`{a} degli Studi di Trieste, Italy (rossedi@units.it)}\and
Sergio Vessella\thanks{
Universit\`{a} degli Studi di Firenze, Italy (sergio.vessella@unifi.it)}}
\date{}
\maketitle
\begin{abstract}

We consider the inverse problem of determining the Lam\'e moduli for a piecewise constant elasticity tensor
$\CC= \sum_{j} \CC_j \chi_{D_j}$, where $\{D_j\}$ is a known finite partition of the body $\Omega$, {}from the Dirichlet-to-Neumann map.
We prove that Lipschitz stability estimates can be derived under $C^{1,\alpha}$ regularity assumptions on the interfaces.

\end{abstract}

\noindent
\textbf{MSC: }{Primary 35R30, 35J55 Secondary: 35R05 }

\noindent
\textbf{keywords: }{Inverse boundary value problem, Lam\'{e} system, piecewise constant coefficients, Lipschitz stability}

\section{Introduction\label{sec0}}

An important inverse problem arising {}from engineering
sciences consists in determining the elasticity coefficients
of the material occupying a three dimensional body {}from
measurements of tractions and displacements taken on its
accessible boundary.

The boundary value problem {}from which this inverse problem
originates is as follows. Let $\Omega$ be a bounded domain in
$\RR^3$ made by a linearly elastic isotropic material, with Lam\'{e}
moduli $\mu$ and $\lambda$ satisfying the strong
convexity conditions $\mu(x)\geq \alpha_0 >0$, $2\mu(x) +
3\lambda(x) \geq \beta_0$ in $\Omega$, for some positive constants
$\alpha_0$ and $\beta_0$. For a given $\psi \in H^{
\frac{1}{2}}(\partial \Omega)$, the direct problem consists in
finding the displacement field $u \in H^1(\Omega)$ solution to the
Dirichlet problem
\begin{equation}
    \label{I1.1}
    \left\{\begin{array}{rcl}
             \mbox{div}(\CC  \nabhat u)& = & 0, \quad \mbox{ in }\om, \\
             u & = & \psi, \quad \mbox{ on }\der\om,
           \end{array}\right.
\end{equation}
where $\CC= \lambda(x) I_3 \otimes I_3 + 2\mu(x){\II}_{Sym}$ is
the Lam\'{e} elasticity tensor.

We denote by $\Lambda_{\CC}: H^{ \frac{1}{2}}(\partial \Omega)
\rightarrow H^{ - \frac{1}{2}}(\partial \Omega)$ the
Dirichlet-to-Neumann map associated to the problem \eqref{I1.1},
that is the operator which maps the Dirichlet data $u|_{\partial
\Omega} = \psi$ onto the corresponding Neumann data $(\CC \nabhat
u) \nu|_{\partial \Omega}$, where $\nu$ is the outer unit normal
to $\Omega$.

An interesting inverse problem is the determination of the Lam\'{e} coefficients $\mu$ and $\lambda$ when
$\Lambda_{\CC}$ is known. Most of the results available in the
literature concern the uniqueness issue. A linearized version of
this inverse problem was considered by Ikehata \cite{Ik}. In
\cite{NU1}, Nakamura and Uhlmann established that in two
dimensions the Lam\'{e} moduli are uniquely determined by
$\Lambda_{\CC}$, provided that they are smooth (e.g., $C^\infty(
\overline{\Omega}))$ and sufficiently close to positive constants.
The uniqueness in dimension three was proved in \cite{NU2},
\cite{ER}, \cite{NU3}, assuming that the Lam\'{e} moduli are
$C^\infty( \overline{\Omega})$ and $\mu$ close to a positive
constant. Recent results concern the uniqueness in the case of
partial Cauchy data, see \cite{IUY} for details.

The stability issue for the above inverse problem is expected to
be significantly more difficult than uniqueness and, to our
knowledge, no general result is known. In the simpler context of
an electric conductor, which involves the determination of a
single smooth coefficient in a scalar elliptic equation {}from
boundary measurements, it is well-known that the optimal rate of
continuous dependence is of logarithmic type, see, for instance, \cite{Al1} and \cite{M}. It
follows that logarithmic stability estimates, or even worse ones,
are expected in our case. In addition, the
situation is more complicated because, in several practical
applications, the Lam\'{e} moduli are not smooth and, in some
cases, may also be discontinuous.

In order to have better stability results, a possible way
is based on the
introduction of \emph{suitable} a priori assumptions that are physically
relevant and restore well-posedness. Following the approach suggested by
Alessandrini and Vessella \cite{AV} in the conductivity framework,
in \cite{BFV} the
authors considered a class of piecewise constant elasticity
tensors of the form
\begin{equation}
    \label{I4.1}
    \CC=\sum_{j=1}^N (\lambda_j I_3\otimes I_3
    +2\mu_j{\II}_{Sym})\chi_{D_j}(x),
\end{equation}
where the collection of disjoint Lipschitz domains
$\{D_j\}_{j=1}^N$ forms a known decomposition of the domain
$\Omega$, and $\lambda_j$, $\mu_j$, $j=1,...,N$, are unknown
constants to be determined {}from $\Lambda_\CC$. Assuming that the boundaries of the domains $D_j$
contain \textit{flat portions}, the authors were able to prove a
Lipschitz continuous dependence of the Lam\'{e} moduli {}from the
local Dirichlet-to-Neumann map.

The structure \eqref{I4.1} assumed for $\CC$ fits well in several
problems arising in applications. Polyhedral partitions of
$\Omega$ appear frequently in finite element meshing used for
effective reconstruction of the Lam\'{e} parameters \cite{BJK}. In
identification of material properties of masonry walls or concrete
dams, for example, the actual elasticity coefficients are
approximated by assuming that each finite element or group of
finite elements is made by homogeneous Lam\'{e} material. The
partition of the domain is often suggested by a priori information
on different grades of the material or, in the case of dams, by
the possible presence of natural joints inside the concrete \cite{XJY}.
Obviously, it is not always possible to ensure that the domains
$D_j$ have a flat portion of their boundary in common and,
therefore, in order to address these more general inverse
problems, it is necessary to remove this a priori assumption.

In this paper we prove
a Lipschitz stability estimate assuming
$C^{1,\alpha}$ regularity of some portions $\Sigma_j$ of the interfaces joining contiguous
domains $D_{j-1}$, $D_j$ and on the portion $\Sigma$ of $\partial \Omega$
where the measurements are taken. The precise regularity conditions are
given in Section \ref{subsec2.3} (assumptions \textbf{(A1)}).

Our proof is inspired by the paper \cite{BFV} and is mainly based
on the use of unique continuation properties and on a refined local analysis, near the
$C^{1,\alpha}$ interface $\Sigma_j$, of the behavior of the corresponding biphase fundamental solution
(see Subsection \ref{subsec-sing} for the precise setting). To this aim, a new mathematical tool is
the recent asymptotic approximation of this fundamental solution (see \cite{AdCMR}) in terms
of the biphase fundamental solution associated to a flat interface, which was determined in a close form
by Rongved \cite{R}.

We follow a slightly different procedure to prove the stability estimate. In \cite{BFV} the authors reformulate the direct problem
in terms of the nonlinear forward map $F$ acting on a compact subset $K$ of
$\RR^{2N}$, and use an abstract lemma (see \cite{BaVe}) which ensures that the inverse map
$\left(F|_K\right)^{-1}$ is Lipschitz continuous. Here, instead, we give a more direct proof following the lines
in \cite{AV} for the conductivity framework. As in \cite{AV}, \cite{BFV}, also our proof proceeds
by induction. However, in order to simplify the presentation and to emphasize the crucial points
where new tools are needed, we focus on the first two steps of the induction process. Precisely, the key role
of the asymptotic approximation of the biphase fundamental solution is emphasized in the first step, whereas
the second step explains how to use the transmission conditions at the interface and the stability estimates for
the Cauchy problem to propagate the smallness crossing an interface.

\section{Main result}\label{sec2}
\subsection{Notation and main definitions}\label{subsec2.1}
For every $x\in\RR^3$ we set  $x=(x^\prime,x_3)$ where
$x^\prime\in\RR^{2}$ and $x_3\in \RR$. For every $x\in \RR^3$, $r$
and $L$ positive real numbers we will denote by $B_r(x)$,
$B_r^\prime(x^\prime)$ and $Q_{r,L}(x)$  the open ball in $\RR^3$
centered at $x$ of radius $r$, the open ball in $\RR^2$ centered
at $x^\prime$ of radius $r$ and the cylinder
$B_r^\prime(x^\prime)\times (x_3-Lr,x_3+Lr)$, respectively. In the
sequel $B_r(0)$, $B_r^\prime(0)$ and $Q_{r,L}(0)$ will be denoted
by $B_r$, $B^\prime_r$ and $Q_{r,L}$, respectively.
 We will also denote by $\RR^3_+=\{(x^\prime,x_3)\in\RR^3 | x_3>0\}$,
$\RR^3_-=\{(x^\prime,x_3)\in \RR^3 | x_3<0\}$, $B^+_r=B_r\cap
\RR^3_+$, and $B^-_r=B_r\cap \RR^3_-$.

For any subset $D$ of $\RR^3$ and any $h>0$, we denote by
\[(D)_{h}=\{x \in D| \mbox{ dist}(x,\RR^3\setminus D)>h\}.\]

\begin{definition}
  \label{def:2.1} (${C}^{k,\alpha}$ regularity)
Let $U$ be a bounded domain in $\mathbb{R}^{3}$. Given $k$,
$\alpha$, with $k\in \mathbb{N}$ and $0<\alpha\leq 1$, we say that
$U$ is of class ${C}^{k,\alpha}$ with constants $r_{0}$, $L$, if,
for any $P \in \partial U$, there exists a rigid transformation of
coordinates under which we have $P=0$ and
\begin{equation*}
  U \cap B_{r_{0}}(0)=\{x \in B_{r_{0}}(0) | x_{3}>\varphi(x')
  \},
\end{equation*}
where $\varphi$ is a ${C}^{k,\alpha}$ function on $\RR^2$
satisfying
\begin{equation*}
\varphi(0)=0,
\end{equation*}
\begin{equation*}
\nabla \varphi (0)=0, \quad \hbox{when } k \geq 1,
\end{equation*}
\begin{equation*}
\|\varphi\|_{C^{k,\alpha}(\RR^2)} \leq Lr_{0}.
\end{equation*}

\medskip
\noindent When $k=0$, $\alpha=1$, we also say that $U$ is of
\textit{Lipschitz class with constants $r_{0}$, $L$}.
\end{definition}

\begin{rem}
  \label{rem:2.1}
  We use the convention to normalize all norms in such a way that their
  terms are dimensionally homogeneous and coincide with the
  standard definition when the dimensional parameter equals one.
  For instance, the norm appearing above is meant as follows
\begin{equation*}
  \|\varphi\|_{{C}^{k,\alpha}(\RR^2)} =
  \sum_{i=0}^k r_0^i
  \|\nabla^i\varphi\|_{{L}^{\infty}(\RR^2)}+
  r_0^{k+\alpha}|\nabla^k\varphi|_{\alpha, \RR^2},
\end{equation*}
where
\begin{equation*}
|\nabla^k\varphi|_{\alpha, \RR^2}= \sup_ {\overset{\scriptstyle
x', y'\in \RR^2}{\scriptstyle x'\neq y'}}
\frac{|\nabla^k\varphi(x')-\nabla^k\varphi(y')|} {|x'-y'|^\alpha}.
\end{equation*}

Similarly,
\begin{equation*}
\|u\|_{H^m(\Omega)}=\left(\sum_{i=0}^m
r_0^{2i-3}\int_\Omega|\nabla^i u|^2\right)^{\frac{1}{2}},
 \quad
  \|u\|_{{C}^{k}(\Omega)} =\sum_{i=0}^{k}
  {r_{0}}^{i}\|\nabla^{i} u\|_{{L}^{\infty}(\Omega)},
\end{equation*}
\begin{equation*}
\|u\|_{L^2(\partial\Omega)}=\left(
r_0^{-2}\int_{\partial\Omega}|u|^2\right)^{\frac{1}{2}},
\end{equation*}
where $H^0(\Omega) = L^2(\Omega)$, and so on for trace norms such as
$\|\cdot\|_{H^{\frac{1}{2}}(\partial \Omega)}$,
$\|\cdot\|_{H^{-\frac{1}{2}}(\partial \Omega)}$, where $\Omega$ is
a bounded subset of $\RR^3$ with regular boundary.
\end{rem}

We will also make use of the following notation for matrices and
tensors. Let $\mathcal{L}(\RR^3,\RR^3)$ be the linear space of $3
\times 3$ matrices. For any $A, B \in \mathcal{L}(\RR^3,\RR^3)$,
we set $A : B=\sum_{i,j=1}^3 A_{ij}B_{ij}$, $|A|^2 = A : A$
 and $\hat A=\frac{1}{2}(A+A^T)$. By $I_3$ we denote the $3\times 3$ identity matrix and by ${\II}_{Sym}$ we denote the fourth order tensor such that ${\II}_{Sym}A=\hat A$.

\bigskip

In the whole paper we are going to consider isotropic elastic
materials, hence the fourth order elasticity tensor $\CC$ is given
by
\begin{equation}\label{isotr}
\CC(x)=\lambda(x)I_3\otimes I_3 +2\mu(x){\II}_{Sym}, \quad\hbox{
for  a.e.  }x\mbox{ in } \Omega,
\end{equation}
where $\Omega$ is a bounded domain in $\RR^3$ of Lipschitz class and $(I_3\otimes I_3)A= (I_3 : A) A$ for every
$3 \times 3$ matrix $A$.
The real valued functions $\lambda=\lambda(x)$ and $\mu=\mu(x)\in
L^{\infty}(\om)$ are the Lam\'{e} moduli, and satisfy the strong
convexity condition
\begin{equation}
\label{strong-convexity}
     \alpha_0\leq \mu(x) \leq \alpha_0^{-1}, \quad \lambda(x) \leq
    \alpha_0^{-1}, \quad 2\mu(x) +3\lambda(x) \geq \beta_0, \quad
    \hbox{for a.e. } x \hbox{ in } \Omega,
\end{equation}
where $\alpha_0\in (0,1]$, $\beta_0 \in(0,2]$ are given constants. Let us
notice that the Poisson's ratio $\nu(x) =
\frac{\lambda(x)}{2(\lambda(x)+\mu(x))}$ satisfies
\begin{equation}
\label{Poisson}
    -1 + \frac{\alpha_0 \beta_0}{4} \leq \nu(x) \leq \frac{1}{2} -
    \frac{\alpha_0^2}{4}, \quad
    \hbox{for a.e. } x \hbox{ in } \Omega.
\end{equation}
Under these assumptions, the elasticity tensor  $\CC$ satisfies
the minor and major symmetry conditions
\begin{equation}
\label{symmetry-of-C}
    \CC  A = \widehat{A}, \quad \CC A : B= \CC B : A,
\end{equation}
and the strong convexity condition
\begin{equation}
\label{strong-convexity-of-C}
    \CC  A : A \geq \xi_0 |A|^2,
\end{equation}
where $\xi_0 = \min \{2\alpha_0, \beta_0\}$, for every $A$, $B\in \mathcal{L}(\RR^3,\RR^3)$.

In the sequel we will make use of the following norm in the linear
space of bounded isotropic tensors:
\[\|\CC\|_{\infty}=\max\left\{\|\lambda\|_{L^{\infty}(\om)},\|\mu\|_{L^{\infty}(\om)}\right\}.\]

Our boundary measurements are represented by the
Dirichlet-to-Neumann map. As a matter of fact, since we will
restrict our measurements to boundary data that have support on
some subset of the boundary, we will make use of a local
Dirichlet-to-Neumann map.

\begin{definition}[The Local Dirichlet-to-Neumann map]
\label{DNmap}
Let $\om$ be a bounded domain of Lipschitz class and let $\Sigma$ be an open portion of $\der\om$.
We denote by $H^{ \frac{1}{2} }_{co}(\Sigma)$ the function space
\[
H^{ \frac{1}{2} }_{co}(\Sigma):=\left\{\phi\in H^{ \frac{1}{2} }(\der\om)\,\ |\ \,\mbox{supp }\phi\subset\Sigma\right\}
\]
and by $H^{- \frac{1}{2} }_{co}(\Sigma)$ the topological dual of $H^{ \frac{1}{2} }_{co}(\Sigma)$. We denote by $<\cdot,\cdot>$ the dual pairing between $H^{ \frac{1}{2} }_{co}(\Sigma)$ and
$H^{- \frac{1}{2} }_{co}(\Sigma)$ based on the $L^2(\Sigma)$ scalar product, that is
$<f,g> = r_0^{-2}\int_{\partial\Omega}fg$, for every $f,g\in L^2(\partial\Omega)$.
 Then, given $\psi\in H^{ \frac{1}{2} }_{co}(\Sigma)$,  there exists a unique vector valued function $u\in H^1(\om)$ weak solution to the Dirichlet
 problem
\begin{equation}\label{1}
    \left\{\begin{array}{rcl}
             \mbox{div}(\CC  \nabhat u)& = & 0, \quad \mbox{ in }\om, \\
             u & = & \psi, \quad \mbox{ on }\der\om.
           \end{array}\right.
\end{equation}
We define the local Dirichlet-to-Neumann linear map
$\Lambda_{\CC}^{\Sigma}$ as follows:
\[
\Lambda_{\CC}^{\Sigma}:\psi\in  H^{ \frac{1}{2} }_{co}(\Sigma)\rightarrow
(\CC\nabhat u)n|_{\Sigma}\in H^{- \frac{1}{2} }_{co}(\Sigma),
\]
\end{definition}
where $n$ is the exterior unit vector to $\Omega$.

Note that for $\Sigma=\der\om$ we get the usual
Dirichlet-to-Neumann map. For this reason we will set
$\Lambda_{\CC}:=\Lambda_{\CC}^{\der\om}$.

The map $\Lambda_{\CC}^{\Sigma}$ can be identified with the bilinear form on $H^{ \frac{1}{2} }_{co}(\Sigma)\times H^{ \frac{1}{2} }_{co}(\Sigma)$ by
\begin{equation}\label{DNbilineare}
\widetilde{\Lambda}_{\CC}^{\Sigma}(\psi, \phi):=<\Lambda_{\CC}^{\Sigma}\psi, \phi>=r_0^{-2}\int_{\om}\CC \nabhat u:\nabhat v,
\end{equation}
for all $\psi,\phi \in H^{ \frac{1}{2} }_{co}(\Sigma)$ and where $u$ solves (\ref{1})
and $v$ is any $H^1(\om)$ function such that $v=\phi$ on $\der\om$.

We shall denote by $\|\cdot\|_{\star}$ the usual norm in the linear space $\mathcal{L}\left(H^{ \frac{1}{2} }_{co}(\Sigma),H^{- \frac{1}{2} }_{co}(\Sigma)\right)$.
Let us observe that, {}from our convention on the homogeneity of the norms,
we have in particular that

\[
\left\|\Lambda_{\CC}^{\Sigma}\right\|_{\star}=\sup
\left|r_0^{-2}\int_{\Omega}\CC\nabhat u:\nabhat
v\right|,
\]
where the sup is taken for $\phi$, $\psi\in H^{ \frac{1}{2} }_{co}(\Sigma)$,
$\|\phi\|_{ H^{ \frac{1}{2} }_{co}}(\Sigma) = \|\psi\|_{ H^{ \frac{1}{2}}_{co}}(\Sigma) =1$, being
$u$ the solution to (\ref{I1.1}) and $v\in H^1(\Omega)$ any extension of $\phi$.

\subsection{A priori assumptions and statement of the main result}\label{subsec2.3}

Our main assumptions are:

\bigskip

\noindent{\bf (A1)} $\om\subset\RR^3$ is an open bounded domain such that
\[
\Omega \ \hbox{is of class } C^{0, 1}, \ \hbox{with constants } \,
r_0, L,
\]
and we assume that
\[\ov{\om}=\cup_{j=1}^N \ov{D}_j,\]
where  $D_j$, $j=1,\ldots,N$, are connected and pairwise disjoint
domains of class $C^{0,1}$ with constants $r_{0}$, $L$, such that
there exists a constant $A>0$ such that
\[
|D_j|\leq Ar_0^3,\quad j=1,\cdots, N.
\]
We also assume that there exists one region, say $D_1$, such that
$\der D_1\cap\der\om$ contains the open  portion $\Sigma$ where
the measurements are taken. Moreover, for every
$j\in\{2,\ldots,N\}$ there exist $j_1,\ldots,j_M\in\{1,\ldots,N\}$
such that
\[
D_{j_1}=D_1,\quad D_{j_M}=D_j,
\]
and, for every $k=2,\ldots,M$, the set
\[
\der D_{j_{k-1}}\cap \der D_{j_k}
\]
contains a portion $\Sigma_k\subset\om$.

\noindent
Furthermore, for $k=1,\ldots,M$, we assume  there exists $P_k\in\Sigma_k$ and a rigid transformation of coordinates such that $P_k=0$ and for all $k=1,\cdots, M$
\label{walkway}\begin{eqnarray*}
\Sigma_k\cap Q_{r_0,L} &=& \{x\in Q_{r_0,L}\ |\  x_3=\varphi_k(x')\}, \\
D_{j_k}\cap Q_{r_0,L} &=& \{x\in  Q_{r_0,L}\ |\  x_3<\varphi_k(x')\}, \\
 D_{j_{k-1}}\cap  Q_{r_0,L} &=& \{x\in  Q_{r_0,L}\ |\
 x_3>\varphi_k(x')\},
\end{eqnarray*}
where $\Sigma_1\subset\Sigma$, with $\varphi_k\in C^{1,\alpha}(\mathbb{R}^2)$ such that
\[
\varphi_k(0)=|\nabla\varphi_k(0)|=0,\quad \|\varphi_k\|_{C^{1,\alpha}(\mathbb{R}^2)}\leq Lr_0,
\]
and where  we set  $D_{j_{0}}:=\mathbb{R}^3\backslash
\overline{\Omega}$.  Finally, let
\[
D_0:= \{x\in Q_{ \frac{2}{3}r_0,L} |
\varphi_1(x')<x_3<\frac{2}{3}r_0L\}
\]
and let $\Omega_0=\textit{Int}(\overline{\Omega\cup D_0})$.
Observe that $D_0$ and $\Omega_0$ are of class $C^{0,1}$, with
constants $r_1,L_1$ such that $L_1= \tan \left ( \frac{\pi}{4} +
\frac{1}{2} \arctan L \right )$ and $r_1 = \frac{2}{3} \frac{
\sqrt{  1+L^2  }}  { \sqrt{1+L_1^2}  }r_0$.

For simplicity we will call $D_{j_1},\ldots,D_{j_M}$ \textit{a
chain of domains connecting $D_1$ to $D_j$}. For any
$k\in\{1,\ldots,M\}$ we will denote by $n_k$ the exterior unit
vector to $\partial D_k$ in $P_k$.

\bigskip

\noindent{\bf (A2)} We assume that the tensor $\CC$ is piecewise
constant
\begin{equation}\label{isotrelisa}
\CC=\sum_{j=1}^N\CC_j \chi_{D_j}(x),
\end{equation}
where
\[\CC_j=\lambda_j I_3\otimes I_3 +2\mu_j{\II}_{Sym},\]
with constant Lam\'{e} coefficients $\lambda_j$ and $\mu_j$
satisfying \eqref{strong-convexity}.
In what follows we shall refer to the constants $L$, $\alpha$,
$A$, $N$, $\alpha_0$, $\beta_0$ as to the \emph{a priori data}.

In the sequel we will introduce a number of constants that we will
always denote by $C$. The values of these constants might differ
{}from one line to the other.

The main result of this paper is the following stability result.
\begin{theo}\label{teo2.1}
Let $\om$ and $\Sigma$ satisfy {\bf (A1)} and let the tensors $\CC$
and $\overline{\CC}$ satisfy {\bf (A2)}. Then there exists a positive
constant $C$ depending only on the a priori data such that
\begin{equation}\label{stabil}
\|\CC-\overline{\CC}\|_{\infty}\leq C r_0^{}\left\|\Lambda^{\Sigma}_{\CC}-\Lambda^{\Sigma}_{\overline{\CC}}\right\|_{\star}.
\end{equation}
\end{theo}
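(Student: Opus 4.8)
The plan is to prove the Lipschitz estimate \eqref{stabil} by induction along a chain of domains $D_{j_1}, \ldots, D_{j_M}$ connecting the measurement region $D_1$ to an arbitrary region $D_j$, following the scheme of \cite{AV} and \cite{BFV} but adapted to the non-flat $C^{1,\alpha}$ interfaces. The induction hypothesis at step $k$ is that the Lam\'e pairs $(\lambda_{j_i}, \mu_{j_i})$ for $i \le k-1$ are already controlled by $r_0\|\Lambda^\Sigma_\CC - \Lambda^\Sigma_{\ov\CC}\|_\star$, and one wants to propagate this control across the interface $\Sigma_k$ to the next pair $(\lambda_{j_k}, \mu_{j_k})$. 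As announced in the introduction, I would carry out in detail only the base case $k=1$ and the first propagation step $k=2$, since these already exhibit all the new ingredients.

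For the \emph{base case}, the strategy is to use singular solutions. I would place a suitable fundamental-solution-type singularity (a point force, i.e.\ a column of the Kelvin-type biphase fundamental solution) at a point $y$ outside $\Omega_0$, near $P_1\in\Sigma\subset\partial\Omega$, and let $y$ approach the interface. The associated energy/mismatch functional, built from the difference $\Lambda^\Sigma_\CC - \Lambda^\Sigma_{\ov\CC}$ evaluated on the traces of these singular solutions, can on one hand be bounded above by $C\,r_0\|\Lambda^\Sigma_\CC - \Lambda^\Sigma_{\ov\CC}\|_\star$ (after extending into the collar $D_0$ and using that the two tensors agree near $\Sigma$ in $D_0$), and on the other hand its leading singular behavior as $y\to P_1$ must encode the jump of the coefficients across $\Sigma_1$. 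The crucial new tool here is the asymptotic approximation of \cite{AdCMR}: near the $C^{1,\alpha}$ interface the biphase fundamental solution agrees, to leading order, with Rongved's explicit flat-interface fundamental solution \cite{R}, the error being a lower-order correction controlled by the $C^{1,\alpha}$ norm of $\varphi_1$. Extracting from Rongved's closed form the precise leading coefficient of the singularity as a function of $(\lambda_1,\mu_1,\lambda_{j_0},\mu_{j_0})$ versus $(\ov\lambda_1,\ov\mu_1,\ldots)$ yields, after dividing by the singular rate and matching, the inequality $|\lambda_1 - \ov\lambda_1| + |\mu_1 - \ov\mu_1| \le C\,r_0\|\Lambda^\Sigma_\CC - \Lambda^\Sigma_{\ov\CC}\|_\star$.

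For the \emph{inductive step}, suppose $(\lambda_{j_{k-1}},\mu_{j_{k-1}})$ is already controlled. The idea is that once the coefficients in $D_{j_{k-1}}$ are known up to the required smallness, the two solutions $u$ and $\ov u$ (for the singular data) satisfy equations with nearly equal coefficients in $D_{j_{k-1}}$, so by stability estimates for the Cauchy problem (three-spheres / propagation of smallness via unique continuation for the Lam\'e system) the Cauchy data of $u-\ov u$ are small on a portion of $\Sigma_k$. Using the transmission conditions across $\Sigma_k$ — continuity of the displacement and of the traction $(\CC\nabhat u)n$ — one then transfers this smallness to the Cauchy data seen from the $D_{j_k}$ side. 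Finally, placing a fresh singularity with pole approaching $\Sigma_k$ and again invoking the \cite{AdCMR}/\cite{R} local analysis of the biphase fundamental solution across the $C^{1,\alpha}$ interface $\Sigma_k$, the leading singular term now isolates the jump $(\lambda_{j_k}-\lambda_{j_{k-1}})$, $(\mu_{j_k}-\mu_{j_{k-1}})$; combining with the already-established control on $(\lambda_{j_{k-1}},\mu_{j_{k-1}})$ gives the estimate for $(\lambda_{j_k},\mu_{j_k})$. Iterating over the chain and over all $j$, and absorbing the finitely many constants into a single $C$ depending only on the a priori data $L,\alpha,A,N,\alpha_0,\beta_0$, yields \eqref{stabil}.

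The main obstacle I expect is the base/propagation singular analysis near the curved interface: one must show that replacing the true biphase fundamental solution by Rongved's flat-interface solution introduces only an error that is genuinely lower order than the leading singularity being extracted, \emph{uniformly} as the pole $y$ tends to $\Sigma_k$. This requires the quantitative asymptotics of \cite{AdCMR} with explicit dependence on the distance from the pole to the interface and on $\|\varphi_k\|_{C^{1,\alpha}}$, and a careful bookkeeping to ensure the remainder does not contaminate the coefficient-jump term. The second delicate point is controlling the constants in the Cauchy-problem stability estimates so that they depend only on the a priori data and remain stable through the finitely many induction steps; here the geometric constructions (the collar $D_0$, the enlarged domain $\Omega_0$, and the cylinders $Q_{r_0,L}$ straightening each $\Sigma_k$) are what make the propagation of smallness quantitative and uniform.
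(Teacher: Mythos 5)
Your overall architecture (chain of domains, singular solutions with poles approaching the interfaces, Alessandrini's identity, the asymptotics of \cite{AdCMR} reducing the biphase fundamental solution to Rongved's flat-interface solution, three-spheres propagation of smallness, Cauchy-problem stability and transmission conditions) is indeed the one used in the paper. However, there is a genuine gap at the heart of your induction: you assume that each step produces an actual Lipschitz bound, $|\lambda_{j_k}-\ov{\lambda}_{j_k}|+|\mu_{j_k}-\ov{\mu}_{j_k}|\le C\,r_0\|\Lambda^{\Sigma}_{\CC}-\Lambda^{\Sigma}_{\ov{\CC}}\|_{\star}$, and you take this as the induction hypothesis. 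This is not obtainable by the tools you invoke, and the paper never proves such a statement step by step. The obstruction is quantitative: to bring the pole $y_r=P_1+rn_1$ close to the interface one must iterate the three spheres inequality \eqref{3s-2} over a chain of balls of decreasing radius, which yields a bound of the form $CE\left(\epsilon/E\right)^{\delta^{\eta}}$ with $\eta\sim|\ln(r/r_0)|$, where $E=\|\CC-\ov{\CC}\|_{L^{\infty}(\Omega_0)}$ and $\epsilon=r_0\|\Lambda^{\Sigma}_{\CC}-\Lambda^{\Sigma}_{\ov{\CC}}\|_{\star}$; as $r\to 0$ the exponent $\delta^{\eta}\to 0$ and the estimate degenerates. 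Meanwhile, the errors incurred in replacing the Green's function by the biphase fundamental solution and the latter by Rongved's solution are of size $E(r/r_0)^{\gamma}$ (coming from \eqref{energy} and \eqref{asymp1}--\eqref{asymp2}). You therefore cannot ``divide by the singular rate and match'' as $y\to P_1$: the only available move is to optimize over $r$, balancing the two error sources, and this produces merely a logarithmic modulus, $\|\CC_1-\ov{\CC}_1\|_{\infty}\le CE\,\omega_1(\epsilon/E)$ with $\omega_1(t)=|\ln t|^{-\gamma/(2A|\ln\delta|)}$, and iterated-logarithm moduli $\omega_k$ at later steps. Such losses are intrinsic to unique continuation and cannot be removed by a cleverer bookkeeping.

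The Lipschitz estimate \eqref{stabil} is then recovered not step by step but by a dichotomy exploiting the homogeneous form of these bounds: each estimate is $E$ times a modulus of the \emph{ratio} $\epsilon/E$. The paper first fixes $j$ so that $D_j$ attains the maximum $E$ and runs the induction along a chain reaching $D_j$. If the maximum is attained already at $D_1$, then $E\le CE\,\omega_1(\epsilon/E)$ forces $\omega_1(\epsilon/E)\ge 1/C$, hence $\epsilon/E\ge\omega_1^{-1}(1/C)$, i.e. $E\le \epsilon/\omega_1^{-1}(1/C)$, which is the Lipschitz bound; otherwise one carries the (only logarithmic) estimate $\|\CC_1-\ov{\CC}_1\|_{\infty}\le CE\,\omega_1(\epsilon/E)$ as the induction invariant into the next step, and so on, the Lipschitz constant emerging only at the region where the maximum is attained, with the finitely many ($\le N$) worsening moduli absorbed into a single constant depending on the a priori data. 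Without this normalization by $E$ your inductive step fails for the same reason at the interface crossing: the smallness propagated across $\Sigma_k$ via the Cauchy-problem stability can only be measured by $E\,\omega_{k-1}(\epsilon/E)$, never by $\epsilon$ itself. A minor further correction: what the rescaling near $\Sigma_k$ isolates is the difference $\CC_{j_k}-\ov{\CC}_{j_k}$ of the two unknown tensors on the same region, through the difference of the two Rongved solutions evaluated at fixed rescaled points (Proposition 3.2 of \cite{BFV}), rather than the jump between adjacent regions of a single tensor.
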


\subsection{Some basic properties of the Lam\'{e} system}\label{sec4}
\subsubsection{Alessandrini's identity}\label{subsec-aless}
Alessandrini's identity is a key relation connecting the
Dirichlet-to-Neumann maps and a volume integral. It was originally
derived in \cite{Al1} within the conductivity framework. Its
extension to our context is as follows. Given $u_1$ and $u_2$
solutions to
\[
 \mbox{div}(\CC^{k}  \nabhat u_k) =  0, \quad \mbox{ in }\om,
 \quad k=1,2,
\]
we have
\begin{equation}
    \label{AlessId}
    \int_{\om}(\CC^{1}-\CC^{2})\nabhat u_1:\nabhat
    u_2=r_0^2<(\Lambda_{\CC^{1}}-\Lambda_{\CC^{2}})u_2,u_1>,
\end{equation}
where $\Lambda_{\CC^{1}}$, $\Lambda_{\CC^{2}}$ denotes the
Dirichlet-to-Neumann map corresponding to $\CC^{1}$, $\CC^{2}$
respectively.

\subsubsection{Singular solutions}\label{subsec-sing}

In a suitable coordinate system, let us consider the set
\[
D=\{(x',x_3) \ |\ x_3<\varphi(x')\},
\]
where $\varphi\in C^{1,\alpha}(\mathbb{R}^2)$ is such that
\[
\varphi(0)=|\nabla\varphi(0)|=0,\quad \|\varphi\|_{C^{1,\alpha}(\mathbb{R}^2)}\leq Lr_0.
\]
Let
\[\CC_b=\CC+(\CC^D-\CC)\chi_{D},\]
where $\CC$ and $\CC^D$ are constant isotropic elasticity tensors
satisfying \eqref{strong-convexity}. Given $y\in\mathbb{R}^3$,
let us consider the \emph{normalized} fundamental solution
$\Gamma^D(\cdot,y)$ defined by

\begin{equation}
    \label{norm_fund_sol}
    \left\{\begin{array}{rl}
             \mbox{div}(\CC_b\nabhat \Gamma^D(\cdot,y)) & = -\delta_yI_3, \\
             \lim_{|x|\rightarrow\infty}\Gamma^D(x,y) & = 0.
           \end{array}\right.
\end{equation}

The following result, derived in \cite{AdCMR}, holds.

\begin{prop}\label{fundamental}
There exists a unique normalized fundamental solution
$\Gamma^D(\cdot, y) \in C^0(\mathbb{R}^3\setminus \{y\})$.
Moreover, for every $x \in \RR^3$, $x \neq y$, we have
\begin{equation}
  \label{bound1}
   \Gamma^D(x,y) = (\Gamma^D(y,x))^T,
\end{equation}
\begin{equation}
  \label{bound2}
   |\Gamma^D(x,y)| \leq C |x-y|^{-1},
\end{equation}
\begin{equation}
  \label{bound3}
   |\nabla_x \Gamma^D(x,y)| \leq C |x-y|^{-2},
\end{equation}
where the constant $C>0$ only depends on $L$, $\alpha$, $\alpha_0$
and $\beta_0$.
\end{prop}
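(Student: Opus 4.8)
The plan is to establish the existence, uniqueness, and the stated bounds for the normalized fundamental solution $\Gamma^D(\cdot,y)$ by leveraging the asymptotic approximation result of \cite{AdCMR}, which expresses $\Gamma^D$ in terms of the explicitly known flat-interface (biphase) fundamental solution of Rongved \cite{R}. The central idea is that a $C^{1,\alpha}$ interface is, to leading order, a perturbation of a flat interface, so the singular behavior of $\Gamma^D$ should match that of the flat-interface fundamental solution $\Gamma^0$, whose bounds are known in closed form, while the correction terms are of lower order and hence harmless near the singularity $y$.

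First I would recall the structure of the flat-interface fundamental solution $\Gamma^0(x,y)$ for the transmission problem across $\{x_3=0\}$ with the two constant isotropic tensors $\CC$ and $\CC^D$. By Rongved's explicit computation, $\Gamma^0$ decomposes into the free-space Kelvin fundamental solution (which scales like $|x-y|^{-1}$, with gradient scaling like $|x-y|^{-2}$) plus reflected/transmitted image terms that obey the same homogeneity. From this one reads off directly the symmetry $\Gamma^0(x,y)=(\Gamma^0(y,x))^T$ (a consequence of the major symmetry \eqref{symmetry-of-C} of the Lam\'e tensor, i.e.\ self-adjointness of $\mathrm{div}(\CC_b\nabhat\,\cdot\,)$) and the estimates $|\Gamma^0|\le C|x-y|^{-1}$, $|\nabla_x\Gamma^0|\le C|x-y|^{-2}$, with $C$ depending only on $L$, $\alpha$, $\alpha_0$, $\beta_0$ through the ellipticity and the Lam\'e ratios.

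Next I would invoke the asymptotic expansion from \cite{AdCMR}: flattening the interface $\{x_3=\varphi(x')\}$ near $P=0$ by a $C^{1,\alpha}$ change of variables (using $\varphi(0)=|\nabla\varphi(0)|=0$ so that the flattened interface is tangent to $\{x_3=0\}$ at the origin to first order), one writes $\Gamma^D=\Gamma^0+R$, where $R$ is a remainder gaining regularity from the $\alpha$-H\"older control on $\nabla\varphi$. The tangency condition is what makes the leading term genuinely the flat-interface solution; the $C^{1,\alpha}$ bound $\|\varphi\|_{C^{1,\alpha}}\le Lr_0$ then controls $R$ and its gradient by quantities that are strictly less singular than $|x-y|^{-1}$ and $|x-y|^{-2}$ respectively. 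Adding the bounds for $\Gamma^0$ and for $R$ yields \eqref{bound2} and \eqref{bound3}. Existence and uniqueness follow from the variational/layer-potential construction underlying the expansion, together with the decay condition $\lim_{|x|\to\infty}\Gamma^D=0$ that pins down the normalization, while the symmetry \eqref{bound1} is inherited in the limit from the symmetry of $\Gamma^0$ and of $R$, or obtained directly from the self-adjointness of the operator via an integration-by-parts argument using the decay at infinity.

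The main obstacle I expect is making the remainder estimate on $\nabla_x R$ uniform and with the right dependence on the a priori constants, precisely \emph{near} the singularity: one must verify that flattening the $C^{1,\alpha}$ interface does not degrade the $|x-y|^{-2}$ gradient bound, which requires that the perturbation of the coefficients induced by the change of variables be itself $\alpha$-H\"older small, so that the correction to the gradient is of order $|x-y|^{-2+\alpha}$ rather than $|x-y|^{-2}$. Since this delicate local analysis is exactly the content of \cite{AdCMR}, I would cite that work for the quantitative expansion and devote the argument here to assembling its output into the three stated bounds with the claimed dependence of $C$ on $L$, $\alpha$, $\alpha_0$, $\beta_0$.
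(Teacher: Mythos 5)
Your argument is circular, and that is the central problem. The paper does not prove Proposition \ref{fundamental} at all: it is quoted directly from \cite{AdCMR}, where it is established \emph{before}, and independently of, the asymptotic approximation that you take as your starting point. In \cite{AdCMR} the logical order is the reverse of yours: first existence, uniqueness, symmetry and the bounds \eqref{bound2}--\eqref{bound3} are obtained (existence via the general construction of fundamental solutions for elliptic systems whose solutions satisfy local H\"older estimates, which applies here because of Li--Nirenberg type regularity \cite{LN} for systems with piecewise constant coefficients and $C^{1,\alpha}$ interfaces; the gradient bound \eqref{bound3} is exactly a piecewise estimate of that type; the symmetry \eqref{bound1} by a standard duality argument), and only afterwards is the expansion of Proposition \ref{asymptotic} proved, its proof making essential use of the bounds \eqref{bound2}--\eqref{bound3} for $\Gamma^D$ itself in the perturbation and representation arguments. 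So you cannot invoke the expansion to produce those bounds: the remainder estimate you propose to ``add to'' the Rongved bounds presupposes exactly what you are trying to prove.

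Second, even taken as a black box, Proposition \ref{asymptotic} is far too local to yield \eqref{bound2}--\eqref{bound3} for every $x\neq y$: it requires $y=(0,0,h)$ on the normal axis through the distinguished interface point, with $0<h<\frac{Lr_0}{8\sqrt{1+L^2}}$, and it controls $(\Gamma^D-\Gamma)(x,y)$ only for $x$ in a small cylinder intersected with $D$, i.e.\ on one side of the interface (the side opposite to $y$); the gradient estimate \eqref{asymp2} is restricted even further. Configurations with $x$ on the same side as $y$, with $y$ far from the interface or off the normal axis, or with $|x-y|$ large are simply not covered, so the stated global bounds could not be assembled this way even if the circularity were ignored. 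Finally, your remark that ``existence and uniqueness follow from the variational/layer-potential construction underlying the expansion'' concedes the point: that construction \emph{is} the proof of Proposition \ref{fundamental} in \cite{AdCMR}, not a consequence of the expansion. The correct move here is the paper's own: cite \cite{AdCMR} for the proposition itself, and keep Proposition \ref{asymptotic} as a genuinely later, finer result built on top of it.
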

In particular, for $\varphi=0$, we have $D=\R_-^3$ and we
will denote the fundamental solution by $\Gamma$. An explicit
expression of $\Gamma$ has been obtained by Rongved in \cite{R}.

A crucial result in our analysis is the following asymptotic
estimate of $\Gamma^D$ in terms of $\Gamma$ derived in
\cite[Theorem 8.1]{AdCMR}.
\begin{prop}\label{asymptotic}
Let $y=(0,0,h)$, where $0<h< \frac{L r_0}{8 \sqrt{1+L^2} }$. Then
\begin{equation}\label{asymp1}
|(\Gamma^D-\Gamma)(x,y)|\leq
\frac{C}{r_0}\left(\frac{|x-y|}{r_0}\right)^{-1+\alpha},\quad
\forall x\in Q_{\frac{r_0}{8 \sqrt{1+L^2} },L}\cap D,
\end{equation}
\begin{equation}\label{asymp2}
|(\nabla_x\Gamma^D-\nabla_x\Gamma)(x,y)|\leq
\frac{C}{r^2_0}\left(\frac{|x-y|}{r_0}\right)^{-2+\frac{\alpha^2}{3\alpha+2}},\quad
\forall x\in Q^-_{\frac{r_0}{12 \sqrt{1+L^2} }, L}\cap D,
\end{equation}
where $C$ only depends on $L,\alpha, \alpha_0$ and $\beta_0$.
\end{prop}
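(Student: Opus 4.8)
The plan is to study the difference $w(\cdot,y):=\Gamma^D(\cdot,y)-\Gamma(\cdot,y)$ and to show it is more regular than either of its two terms, precisely because the curved interface $\{x_3=\varphi(x')\}$ and the flat interface $\{x_3=0\}$ agree to order $1+\alpha$ near the origin. Denoting by $\CC_b^{0}:=\CC+(\CC^D-\CC)\chi_{\R^3_-}$ the flat-interface tensor, so that $\Gamma$ solves $\mathrm{div}(\CC_b^{0}\nabhat\Gamma)=-\delta_yI_3$, and subtracting the two defining relations \eqref{norm_fund_sol}, one finds in the distributional sense
\[
\mathrm{div}(\CC_b\nabhat w)=-\,\mathrm{div}\big(E\,\nabhat\Gamma\big),\qquad E:=\CC_b-\CC_b^{0}=(\CC^D-\CC)\big(\chi_{D}-\chi_{\R^3_-}\big).
\]
The crucial point is that the source tensor $E$ is supported only in the thin lens-shaped region $S:=D\,\triangle\,\R^3_-$ trapped between the two interfaces. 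From $\varphi(0)=|\nabla\varphi(0)|=0$ and $\|\varphi\|_{C^{1,\alpha}}\leq Lr_0$ one gets $|\varphi(z')|\leq C\,r_0^{-\alpha}|z'|^{1+\alpha}$, so that the slab $S\cap\{|z'|\sim\rho\}$ has thickness at most $C\,r_0^{-\alpha}\rho^{1+\alpha}$; this $\alpha$-gain in the vertical direction is exactly what will upgrade the decay exponent of $w$ relative to that of $\Gamma^D$ and $\Gamma$ separately.

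Next I would write a representation formula for $w$ by integrating the right-hand side against the fundamental solution $\Gamma^D$ of $\mathrm{div}(\CC_b\,\cdot\,)$ and integrating by parts (up to a sign),
\[
w(x,y)=\int_{S}\nabla_z\Gamma^D(x,z):\big(\CC^D-\CC\big)\nabhat_z\Gamma(z,y)\,dz,
\]
the contributions at infinity vanishing by the decay \eqref{bound2}--\eqref{bound3} and the normalization \eqref{norm_fund_sol}. Some care is needed here because $E\,\nabhat\Gamma$ jumps across $\partial S$, so the distributional divergence carries surface terms on $\{x_3=0\}$ and $\{x_3=\varphi(x')\}$; these are handled through the transmission conditions satisfied by $\Gamma$ and $\Gamma^D$ and reorganize into the single volume integral above. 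Using $|\nabla_z\Gamma^D(x,z)|\leq C|x-z|^{-2}$ from Proposition \ref{fundamental} and the analogous explicit bound $|\nabla_z\Gamma(z,y)|\leq C|z-y|^{-2}$ for Rongved's solution, estimate \eqref{asymp1} reduces to controlling
\[
I(x,y)=\int_{S}|x-z|^{-2}\,|z-y|^{-2}\,dz.
\]
I would bound $I$ by a dyadic decomposition of $S$ both in the distance $|z'|$ to the vertical axis and in the distances $|z-x|$, $|z-y|$ to the two singular points, inserting on each shell the thickness bound $C\,r_0^{-\alpha}\rho^{1+\alpha}$; summing the resulting geometric series produces the factor $(|x-y|/r_0)^{-1+\alpha}$, i.e. a gain of $\alpha$ over the bare $|x-y|^{-1}$ behaviour.

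The gradient bound \eqref{asymp2} would be obtained by differentiating the representation in $x$,
\[
\nabla_x w(x,y)=\int_{S}\nabla_x\nabla_z\Gamma^D(x,z):\big(\CC^D-\CC\big)\nabhat_z\Gamma(z,y)\,dz,
\]
where now the mixed second derivative obeys only $|\nabla_x\nabla_z\Gamma^D(x,z)|\leq C|x-z|^{-3}$, a bound itself requiring interior Schauder estimates for $\Gamma^D$ away from its pole. I expect this to be the main obstacle: the kernel $|x-z|^{-3}$ is borderline non-integrable against the thin layer, so one cannot transfer the full $\alpha$-gain. Instead I would split the integral at an optimized radius $\delta$ about $z=x$, estimating the inner part through the Hölder modulus of $\nabla\Gamma$ together with the thinness of $S$, and the outer part through the decay bounds, and then balance the two contributions in $\delta$. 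It is precisely this optimization that degrades the exponent from $-2+\alpha$ to $-2+\tfrac{\alpha^2}{3\alpha+2}$, and reproducing its exact value is the delicate technical heart of the argument, carried out in \cite[Theorem 8.1]{AdCMR}. Throughout, the hypotheses $0<h<\tfrac{Lr_0}{8\sqrt{1+L^2}}$ and the cylindrical restrictions on $x$ serve to keep all points inside the single coordinate patch in which the graph description of the interface, and hence the thickness estimate for $S$, remains valid.
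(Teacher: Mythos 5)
First, a point of reference: the paper does not prove this proposition at all — it is imported verbatim from \cite[Theorem 8.1]{AdCMR} — so your proposal has to be measured against the argument given there. For the zeroth-order estimate \eqref{asymp1} your plan is essentially the correct one and matches the cited proof in spirit: writing the two weak formulations against each other yields the Alessandrini-type identity $(\Gamma^D-\Gamma)(x,y)e_p\cdot e_q=-\int_S(\CC^D-\CC)\nabhat_z\Gamma^D(z,x)e_p:\nabhat_z\Gamma(z,y)e_q\,dz$ with $S=D\,\triangle\,\RR^3_-$ (if you work variationally no interface surface terms ever appear, so the transmission-condition bookkeeping you worry about is unnecessary), and the thinness bound $|\varphi(z')|\leq CLr_0^{-\alpha}|z'|^{1+\alpha}$ combined with \eqref{bound3} and your dyadic decomposition does produce the gain $\alpha$.

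The genuine gap is in your treatment of \eqref{asymp2}. The kernel bound $|\nabla_x\nabla_z\Gamma^D(x,z)|\leq C|x-z|^{-3}$ that you want to differentiate under the integral is simply not available uniformly at the points where \eqref{asymp2} is claimed: across a $C^{1,\alpha}$ interface the solution is only \emph{piecewise} $C^{1,\beta}$ by Li--Nirenberg \cite{LN}; second derivatives are not controlled up to the interface, and interior Schauder estimates within the constant-coefficient phase degenerate like $\mbox{dist}(x,\partial D)^{-1}$. Since $Q^-_{\frac{r_0}{12\sqrt{1+L^2}},L}\cap D$ contains points arbitrarily close to $\{x_3=\varphi(x')\}$ (wherever $\varphi<0$), and these are exactly the points used later in the paper, no splitting of the integral at an optimized radius $\delta$ can repair this — the missing ingredient is not a better balance but a different idea. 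What \cite{AdCMR} actually does is avoid second derivatives of $\Gamma^D$ altogether by interpolation: on balls of radius $\rho\sim|x-y|$, on each side of the interface, one applies $\|\nabla w\|_{L^\infty}\leq C\left(\rho^{-1}\|w\|_{L^\infty}+\|w\|_{L^\infty}^{\frac{\beta}{1+\beta}}\,[\nabla w]_{\beta}^{\frac{1}{1+\beta}}\right)$ to $w=\Gamma^D-\Gamma$, feeding in the already-proved \eqref{asymp1} for $\|w\|_{L^\infty}$ and the scaled Li--Nirenberg bound $[\nabla w]_{\beta}\lesssim|x-y|^{-2-\beta}$ with $\beta=\frac{\alpha}{2(1+\alpha)}$ — the same exponent the paper uses in \eqref{Bound13}. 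This gives a gain of $\frac{\alpha\beta}{1+\beta}=\frac{\alpha^2}{3\alpha+2}$ over the exponent $-2$, which is precisely the exponent in the statement; its peculiar value is a fingerprint of this interpolation, not of an optimized kernel splitting, and your proposal as written cannot reproduce it.
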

Let $\CC$ be an isotropic elasticity tensor satisfying {\bf
(A2)}. We still denote by $\CC$ its  extension to $\Omega_0$ such
that $\CC|_{D_0}=\CC_0$ is the isotropic tensor with Lam\'{e}
parameters $\lambda_0=0$ and $\mu_0=1$. This extended tensor is
still an isotropic elasticity tensor of the form
\begin{equation}\label{isotr1}
\CC=\sum_{j=0}^N\CC_j \chi_{D_j}(x),
\end{equation}
where each $\CC_j$, $j=0,\ldots,N$, has Lam\'{e} parameters
satisfying \eqref{strong-convexity}.

For all possible interfaces $\Sigma_k$ introduced in \textbf{(A1)}
let
\[
\Sigma_k'=\Sigma_k\cap Q_{\frac{r_0}{3}, L},
\]
and denote by $\mathcal{F}:=\cup_{k}\Sigma_k'$. Let
\[y\in \cup_{j=0}^N D_j\cup \mathcal{F}, \quad r^*=r^*(y)=\min \left \{\frac{r_0}{3}, \textit{dist}(y, \cup_{j=0}^N \der D_j\backslash\mathcal{F}) \right
\}.
\]
Let us consider the sphere  $B_{r^*}(y)$. Then, either $B_{r^*}(y)\cap
\mathcal{F}=\emptyset$, so that  $B_{r^*}(y)\subset D_j$  for some
$j\in \{1,\cdots,N\}$ and  we define $\CC_y=\CC_j$, or
$B_{r^*}(y)\cap  \mathcal{F}\neq\emptyset$ and under our
regularity assumptions there exist exactly two domains, say,
$D_{j-1}$ and $D_j$, intersecting $B_{r^*}(y)$ and, in this case,
we define
$\CC_y=\CC_{j-1}+(\CC_{j}-\CC_{j-1})\chi_{\{x_3<\varphi_j(x')\}}$.
In the latter expression, $\varphi_j$ is the function whose graph
contains $\Sigma_j$, according to \textbf{(A1)}.

Let $\Gamma'(\cdot,y)$ denote the normalized fundamental solution
to

\begin{equation}\label{fundamental_solution}
\mbox{div}( \CC_y\nabhat \Gamma'(\cdot, y))=-\delta_y I_3, \quad
\text{ in }\RR^3.
\end{equation}

\begin{prop}\label{Green} Let $\om_0$ and  $\CC$ satisfy
\textbf{(A1)} and \textbf{(A2)}. Then, for any $y\in \cup_{j=0}^N
D_j\cup\mathcal{F}$, there exists a unique matrix-valued
function $G(\cdot,y)\in C^0(\Omega_0 \setminus \{y\},
\mathcal{L}(\RR^3,\RR^3))$ such that
\begin{equation}
    \label{green-2}
    \int_{\om_0}\CC\nabhat G(\cdot,y):\nabhat \phi=\phi(y), \quad \forall\phi\in C^\infty_0(\om_0, \mathcal{L}(\RR^3,\RR^3)),
\end{equation}
\begin{equation}
    \label{green-1}
    G(\cdot,y)=0, \quad \text{ on }\der\om_0,
\end{equation}
and
\begin{equation}\label{green1}
    \|G(\cdot,y)\|_{H^1(\om_0\setminus B_r(y))}\leq \frac{C}{\sqrt{rr_0}}, \quad\forall r\leq
    r^*,
\end{equation}
where $C>0$ depends only on $\alpha_0$, $\beta_0$, $A$, $N$, $\alpha$ and $L$.\\
Furthermore, for every $c_1>1$, if  $\textit{dist}(y, \cup_{j=0}^N
\der D_j\backslash\mathcal{F})\}\geq \frac{r_0}{c_1}$, then
\begin{equation}\label{energy}
\|G(\cdot,y)-\Gamma'(\cdot, y)\|_{H^1(\om_0)}\leq \frac{C}{r_0},
\end{equation}
where $C>0$ depends only on $\alpha_0$, $\beta_0$, $A$, $N$,
$\alpha$, $L$ and $c_1$. Finally
\begin{equation}\label{symgreen}
G(x,y)=G(y,x)^T, \quad \mbox{for every }x,y \in \left (
\cup_{j=0}^N D_j \right ) \cup \mathcal{F}, \  \ x \neq y.
\end{equation}
\end{prop}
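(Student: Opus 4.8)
The plan is to construct $G(\cdot,y)$ by subtracting from the frozen-coefficient fundamental solution $\Gamma'(\cdot,y)$ of \eqref{fundamental_solution} a regular corrector solving a well-posed boundary value problem. The key structural observation is that, by the very definitions of $r^*=r^*(y)$ and of $\CC_y$, one has $\CC=\CC_y$ on $B_{r^*}(y)$, so that $\CC-\CC_y$ vanishes in a full neighborhood of $y$. Since $\mbox{div}(\CC_y\nabhat\Gamma'(\cdot,y))=-\delta_yI_3$, we get in $\Omega_0$
\[
\mbox{div}(\CC\nabhat\Gamma'(\cdot,y))=-\delta_yI_3+\mbox{div}\big((\CC-\CC_y)\nabhat\Gamma'(\cdot,y)\big),
\]
where the last term is supported in $\{|x-y|\geq r^*\}$. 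I therefore set $G(\cdot,y)=\Gamma'(\cdot,y)-w(\cdot,y)$, where $w\in H^1(\Omega_0)$ is the weak solution of
\[
\mbox{div}(\CC\nabhat w)=\mbox{div}\big((\CC-\CC_y)\nabhat\Gamma'\big)\ \mbox{ in }\Omega_0,\qquad w=\Gamma'(\cdot,y)\ \mbox{ on }\partial\Omega_0.
\]
Because $\CC-\CC_y$ vanishes near $y$ while $\Gamma'$ and its gradient are bounded away from $y$ by \eqref{bound2}--\eqref{bound3}, the right-hand side lies in $H^{-1}(\Omega_0)$ and the boundary datum in $H^{1/2}(\partial\Omega_0)$; existence and uniqueness of $w$ then follow from the Lax--Milgram theorem, using the coercivity \eqref{strong-convexity-of-C} together with Korn's second inequality and the Poincaré inequality. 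This produces $G$ satisfying \eqref{green-2} and \eqref{green-1}, and uniqueness of $G$ reduces to uniqueness for the homogeneous problem. The point of the construction is that $w$ carries exactly the non-singular part, so that $G-\Gamma'=-w$ belongs to $H^1(\Omega_0)$ globally, which is what makes \eqref{energy} meaningful.

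The continuity $G(\cdot,y)\in C^0(\Omega_0\setminus\{y\})$ combines the continuity of $\Gamma'(\cdot,y)$ away from $y$ (Proposition \ref{fundamental}) with the regularity of $w$: interior elliptic estimates away from the interfaces, boundary regularity near $\partial\Omega_0$, and transmission regularity for the Lamé system with piecewise constant coefficients across each $C^{1,\alpha}$ interface $\Sigma_k$, which yields $C^{1,\alpha}$ regularity up to the interface from either side and, in particular, continuity across it.

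For the quantitative bounds I would estimate $\Gamma'$ and $w$ separately. Testing the equation for $w$ against $w$ minus an $H^1$ extension of its boundary datum, and using \eqref{strong-convexity-of-C}, Korn and Poincaré, gives
\[
\|w\|_{H^1(\Omega_0)}\leq C\Big(\|(\CC-\CC_y)\nabhat\Gamma'\|_{L^2}+\|\Gamma'(\cdot,y)\|_{H^{1/2}(\partial\Omega_0)}\Big),
\]
where both terms are controlled by \eqref{bound2}--\eqref{bound3} on the region $\{|x-y|\geq r^*\}$, yielding $\|w\|_{H^1(\Omega_0)}\leq C/\sqrt{r_0 r^*}$ with the homogeneous norms of Remark \ref{rem:2.1}. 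On the other hand, \eqref{bound2}--\eqref{bound3} give directly $\|\Gamma'(\cdot,y)\|_{H^1(\Omega_0\setminus B_r(y))}\leq C/\sqrt{r_0 r}$ for $r\leq r^*$, the gradient term being dominant. Since $G=\Gamma'-w$ and $r\leq r^*$, summing the two contributions yields \eqref{green1}. The comparison estimate \eqref{energy} is the special case $G-\Gamma'=-w$: under the hypothesis $\mbox{dist}(y,\cup_j\partial D_j\setminus\mathcal{F})\geq r_0/c_1$ one has $r^*\geq r_0/\max\{3,c_1\}$, so the source of $w$ sits at distance $\gtrsim r_0$ from $y$ and the bound above reads $\|w\|_{H^1(\Omega_0)}\leq C/r_0$, with $C$ depending in addition on $c_1$.

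Finally, the symmetry \eqref{symgreen} rests on the major symmetry \eqref{symmetry-of-C}, $\CC A:B=\CC B:A$. Formally, using $G(\cdot,z)$ as test field in \eqref{green-2} for $G(\cdot,y)$ and conversely gives $G(z,y)=G(y,z)^T$; since $G(\cdot,z)$ is singular at $z$ and not admissible in \eqref{green-2}, I would make this rigorous by excising small balls $B_\ep(y)$ and $B_\ep(z)$, integrating by parts, invoking the boundary condition \eqref{green-1}, and letting $\ep\to0$, the spherical boundary terms vanishing by the decay \eqref{green1} and \eqref{bound2}--\eqref{bound3}. The main obstacle I anticipate is twofold: obtaining the energy estimate for $w$ with a constant \emph{uniform in $y$} as $y$ ranges over $\cup_{j=0}^N D_j\cup\mathcal{F}$, which is precisely where the careful bookkeeping of $r^*$ and the asymptotics of Proposition \ref{asymptotic} enter; and establishing continuity across the $C^{1,\alpha}$ interfaces, which for an elliptic \emph{system} is not automatic as in the scalar De Giorgi--Nash--Moser theory and must be extracted from transmission regularity.
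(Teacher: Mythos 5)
Your construction is correct in its essentials, and it is close to what the paper intends: the paper's proof is a one-line citation of Proposition 3.1 of \cite{BFV}, whose argument rests on the same two pillars you use, namely a fundamental solution matched to the coefficient configuration at the pole and an energy analysis of the corrector $G-\Gamma'$. The visible difference is where existence comes from. In \cite{BFV}, existence, interior bounds and continuity are imported from the general construction of Green functions for elliptic systems (mollified Dirac sources, uniform energy estimates, passage to the limit), with the Li--Nirenberg transmission estimates \cite{LN} supplying the local regularity that the general theory requires; the corrector is analyzed only afterwards, to obtain the analogue of \eqref{energy}. You instead define $G:=\Gamma'-w$ outright, a genuine streamlining made possible precisely by the availability of $\Gamma'$ from \cite{AdCMR}, and it delivers \eqref{green-2}, \eqref{green-1}, \eqref{green1} and \eqref{energy} in one stroke; the price is that all the singular behavior must be carried by $\Gamma'$, which is why your two structural observations --- that $\CC=\CC_y$ on $B_{r^*}(y)$ (true by the design of $r^*$ and $\CC_y$, after normalizing $L\geq 1$ so that the ball stays inside the coordinate cylinder where the graph representation holds) and that $\textit{dist}(y,\partial\Omega_0)\geq r^*$ (since $\partial\Omega_0\subset\cup_{j}\partial D_j\setminus\mathcal{F}$) --- are the crux, and both are right.

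Two steps need repair or at least honesty. Uniqueness: the reduction ``to the homogeneous problem'' fails as stated, because the difference $V=G_1-G_2$ of two functions satisfying \eqref{green-2}--\eqref{green1} is not a priori in $H^1(\Omega_0)$; the bound \eqref{green1} permits $\int_{\Omega_0\setminus B_r(y)}|\nabla G_i|^2\sim r^{-1}$, and nothing forces the two singularities at $y$ to cancel, so Lax--Milgram uniqueness cannot be applied to $V$. The standard fix is a duality argument: for $f$ smooth and supported away from $y$, test \eqref{green-2} with the $H^1_0$ solution of $\mathrm{div}(\CC\nabhat u)=-f$ and, after cutting off near $y$, conclude $u(y)=\int_{\Omega_0} G_i(\cdot,y)^T f$ for $i=1,2$; for the cutoff errors to vanish one needs information near the pole stronger than \eqref{green1}, e.g. the pointwise bound $|G(x,y)|\leq C|x-y|^{-1}$, which your construction does yield (via \eqref{bound2} for $\Gamma'$ and local boundedness of $w$) but which you never state or invoke. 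Continuity: Li--Nirenberg applies only across the $C^{1,\alpha}$ portions $\Sigma'_k$; on the remaining, merely Lipschitz, parts of $\cup_j\partial D_j$ there is no continuity theory for elliptic \emph{systems}, so your argument (and equally the one the paper cites) gives $G(\cdot,y)\in C^0$ only away from those sets. This suffices for the rest of the paper, where $G$ is evaluated only on $(\cup_j D_j)\cup\mathcal{F}$, but it is weaker than the literal claim $G(\cdot,y)\in C^0(\Omega_0\setminus\{y\})$; that caveat is shared with the paper itself.
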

The proof follows the lines of the proof of Proposition 3.1 in
\cite{BFV}.

\subsubsection{Three spheres inequality}\label{subsec-3spheres}

A mathematical tool which plays an important role in the proof of
Theorem \ref{teo2.1} is the following three spheres inequality for
solutions to the Lam\'{e} system.

\begin{prop}\label{3spheres}
Let $u \in H^1(B_R)$ be a solution to the Lam\'{e} system
\begin{equation}
    \label{3s-1}
    \mbox{div}( \CC \widehat{\nabla} u) = 0, \quad \hbox{in } B_R,
\end{equation}
where $\CC$ is a constant isotropic elasticity tensor satisfying
\eqref{strong-convexity}. For every $r_1$, $r_2$, $r_3$, with $0 <
r_1 < r_2 < r_3 \leq R$, we have
\begin{equation}
    \label{3s-2}
    \|u\|_{L^\infty(B_{r_2})} \leq C
    \|u\|_{L^\infty(B_{r_1})}^\delta
    \|u\|_{L^\infty(B_{r_3})}^{1-\delta},
\end{equation}
where $C>0$ and $\delta \in (0,1)$ only depend on $\alpha_0$,
$\beta_0$, $ \frac{r_2}{r_3}$, $ \frac{r_1}{r_3}$.
\end{prop}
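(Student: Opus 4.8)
The plan is to exploit the special structure of the constant-coefficient isotropic Lamé operator to reduce \eqref{3s-1} to a scalar fourth-order problem, and then to deduce \eqref{3s-2} from a three spheres inequality for the bi-Laplacian of Carleman type. Since $\CC$ is constant and isotropic, equation \eqref{3s-1} is the Navier equation
\[
\mu\, \Delta u + (\lambda+\mu)\nabla(\mbox{div}\, u) = 0 \quad \mbox{in } B_R .
\]
Taking the divergence and using that $\mbox{div}$ commutes with $\Delta$, we get $(\lambda+2\mu)\,\Delta(\mbox{div}\, u)=0$; since $\lambda+2\mu\geq\beta_0>0$ by \eqref{strong-convexity}, the scalar function $\mbox{div}\, u$ is harmonic. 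Then $\Delta u = -\frac{\lambda+\mu}{\mu}\nabla(\mbox{div}\, u)$ is the gradient of a harmonic function, hence itself harmonic, so that $\Delta^2 u = 0$ componentwise. Thus each component $u_i$ of $u$ is biharmonic, and in particular real analytic in $B_R$.

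The second step is to prove an $L^2$ three spheres inequality for biharmonic functions. The route I would take is a Carleman estimate for $\Delta^2$ with a radial weight $e^{\tau\phi}$, $\phi(x)=-\log|x|$, valid for $w\in C_0^\infty(B_{r_3}\setminus\{0\})$ and $\tau$ large, of the schematic form
\[
\sum_{|\beta|\leq 1}\tau^{\,p_\beta}\!\int |x|^{-2\tau}|x|^{\,q_\beta}|\nabla^\beta w|^2 \;\leq\; C\!\int |x|^{-2\tau}|\Delta^2 w|^2 .
\]
Applying this to $w=\eta\, u_i$, with $\eta$ a cutoff equal to $1$ on the annulus $\{r_1\leq|x|\leq r_2\}$ and supported in $\{ \tfrac{r_1}{2}\leq|x|\leq r_3\}$, the right-hand side reduces to the commutator $[\Delta^2,\eta]u_i$, which is supported where $\nabla\eta\neq 0$, i.e. near the spheres $|x|=r_1$ and $|x|=r_3$. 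Separating these two contributions against the left-hand side term on $\{|x|\leq r_2\}$, and optimizing over $\tau$, yields the interpolation inequality with a convexity exponent $\delta\in(0,1)$ depending only on the ratios $r_1/r_3$, $r_2/r_3$.

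The third step is to pass from $L^2$ to the $L^\infty$ norms in \eqref{3s-2}. Because each $u_i$ is biharmonic, interior elliptic (or mean-value) estimates give $\|u_i\|_{L^\infty(B_\rho)}\leq C\rho^{-3/2}\|u_i\|_{L^2(B_{2\rho})}$ together with the trivial bound $\|u_i\|_{L^2(B_\rho)}\leq C\rho^{3/2}\|u_i\|_{L^\infty(B_\rho)}$; inserting these into the $L^2$ three balls inequality transfers it to the $L^\infty$ statement, after a harmless adjustment of the radii. Summing over $i=1,2,3$ and using $\|u\|_{L^\infty}\simeq\max_i\|u_i\|_{L^\infty}$ produces \eqref{3s-2} for the full displacement $u$, with $C$ and $\delta$ depending only on $\alpha_0$, $\beta_0$, $r_2/r_3$ and $r_1/r_3$; the elasticity constants enter only through the reduction step, via the bounds on $\lambda+2\mu$ and $(\lambda+\mu)/\mu$ coming from \eqref{strong-convexity}.

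The main obstacle I expect is the Carleman estimate for the fourth-order operator $\Delta^2$: one must verify the pseudoconvexity of the logarithmic weight across the whole annulus, carry out the careful bookkeeping of the commutator and lower-order terms (which for a fourth-order symbol are considerably heavier than in the scalar second-order case), and track the optimization in $\tau$ that produces $\delta$, all while keeping the constants dependent only on the admissible data. An alternative that avoids some of this, which I would keep in reserve, is to use the Almansi representation $u_i=h_1+|x|^2 h_2$ with $h_1,h_2$ harmonic, and to combine the classical three spheres inequality for harmonic functions applied to $h_1$ and $h_2$; but recovering $h_1,h_2$ stably from $u_i$ on the three spheres introduces its own technicalities, so the direct Carleman route is the cleaner backbone.
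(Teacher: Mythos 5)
Your reduction is sound, and in fact it is closely related to the argument behind the result the paper invokes: the paper gives no proof at all here, it simply cites Alessandrini--Morassi \cite{Al-M}. Their proof (which covers even variable Lam\'e moduli) starts exactly where yours does --- taking the divergence of the Navier equation $\mu\Delta u+(\lambda+\mu)\nabla(\mathrm{div}\,u)=0$ to see that $(u,\mathrm{div}\,u)$ solves a weakly coupled elliptic system with diagonal (Laplacian) principal part --- but then it stays at second order: they apply Carleman estimates/three spheres inequalities for \emph{scalar second-order} operators to each equation of the coupled system and combine them. You instead compose the two equations into the single fourth-order equation $\Delta^2u_i=0$ and propose a Carleman estimate directly for $\Delta^2$. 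Both routes are legitimate; what the coupled-system (or iterated) route buys you is precisely the avoidance of the obstacle you flag as the main difficulty. Since $\Delta^2=\Delta\circ\Delta$, you never need to verify pseudoconvexity of the logarithmic weight for a fourth-order symbol: apply the standard second-order Carleman inequality once to $v_i:=\Delta u_i$ (which is harmonic) and once to $u_i$ (with right-hand side $v_i$), and add the two estimates with appropriately matched powers of $\tau$; this yields the three spheres inequality for the pair $(u_i,v_i)$, hence for $u_i$, with constants depending only on the radii ratios. Your step 3 (passing from $L^2$ to $L^\infty$ by interior estimates, at the cost of a harmless adjustment of radii) is standard and correct.

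Two minor points to fix. First, \eqref{strong-convexity} does not give $\lambda+2\mu\geq\beta_0$; it gives $\mu\geq\alpha_0$ and $2\mu+3\lambda\geq\beta_0$, whence
\begin{equation*}
\lambda+2\mu \;\geq\; \frac{\beta_0-2\mu}{3}+2\mu \;=\; \frac{\beta_0+4\mu}{3}\;\geq\;\frac{\beta_0+4\alpha_0}{3}\;>\;0,
\end{equation*}
which is all you need (ellipticity and the legitimacy of dividing by $\mu$ and $\lambda+2\mu$). Second, you should say explicitly that an $H^1$ weak solution of the constant-coefficient system \eqref{3s-1} is smooth in the interior (interior regularity for strongly elliptic constant-coefficient systems), so that the pointwise manipulations $\Delta(\mathrm{div}\,u)=0$ and $\Delta^2u_i=0$ are justified. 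With these repairs, your argument even shows that $C$ and $\delta$ in \eqref{3s-2} depend only on the ratios $r_1/r_3$, $r_2/r_3$, the ellipticity constants entering only through the validity of the reduction --- slightly stronger than the stated dependence, and of course restricted to constant $\CC$, which is all this proposition requires.
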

For a proof, see \cite{Al-M}.

\section{Proof of the main result}\label{sec5}

Let $j\in\{1,\ldots,N\}$ be such that
\[
\|\CC-\overline{\CC}\|_{L^{\infty}(D_j)}=\|\CC-\overline{\CC}\|_{L^{\infty}(\Omega_0)}
\]
and let $D_{j_1},\ldots,D_{j_M}$ be a chain of domains, defined
according to \textbf{(A1)}, connecting $D_1$ to $D_j$. For the
sake of brevity, set $D_k=D_{j_k}$, $k=1,\ldots, M$. Let
\begin{equation}
    \label{path-and-complem}
    {\cal W}_k=\textit{Int}(\cup_{j=0}^k \overline{D}_j), \quad {\cal
    U}_k=\Omega_0\backslash \overline{{\cal W}_k}, \quad k=0,\dots, M-1.
\end{equation}
Note that here the tensors $\CC$ and $\overline{\CC}$ are extended as in
(\ref{isotr1}) in all of $\Omega_0$. Finally, for
$y,z\in(\cup_{j=0}^k D_j)\cup (\cup_{j=1}^k \Sigma'_j)$, let us
define the matrix-valued function
\[
{\cal S}_k(y,z):=\int_{{\cal U}_k}(\CC-\overline{\CC})(x)\nabhat
G(x,y):\nabhat  \overline{G}(x,z)\, dx ,
\]
whose entries are given by
\[
{\cal S}^{(p,q)}_k(y,z):=\int_{{\cal U}_k}(\CC-\overline{\CC})(x)\nabhat
G(x,y)e_p:\nabhat \overline{G}(x,z)e_q\,dx,\quad p,q=1,2,3,
\]
and where $G(\cdot,y)$ and $\overline{G}(\cdot,z) $ denotes
respectively the
singular solution of Proposition
 \ref{Green} corresponding to the tensors $\CC$ and $\overline{\CC}$,
 respectively. Let us denote ${\cal S}_k^{(\cdot,q)}= \sum_{i=1}^3{\cal
 S}_k^{(i,q)}e_i$, ${\cal S}_k^{(p, \cdot)}=  \sum_{i=1}^3{\cal
 S}_k^{(p,i)}e_i$, where $e_i$, $i=1,2,3$, are the fundamental
 unit vectors of $\R^3$.

Proceeding similarly to in \cite[Proposition 4.4]{BFV}, one can
see that the functions ${\cal S}_k^{(\cdot,q)}$, ${\cal S}_k^{(p, \cdot)}$ are solutions
to the Lam\'e system with elasticity tensor $\CC$ defined in \eqref{isotr1} in the weak sense
clarified below.

\begin{prop}\label{sol}
Let $\mathcal{R}_k:=(\cup_{j=0}^k \der D_j)\backslash
(\cup_{j=1}^k \Sigma'_j)$ and let $y,z\in(\cup_{j=0}^k D_j)\cup
(\cup_{j=1}^k \Sigma'_j)$.  Then, ${\cal S}_k^{(\cdot,q)}(\cdot,
z)$ and ${\cal
S}_k^{(p,\cdot)}(y, \cdot)$ belong to
$H^1_{loc}(\mathcal{W}_k\backslash\mathcal{R}_k, \RR^3)$ and
\begin{equation}\label{soly}
    \int_{\mathcal{W}_k}\CC  \nabhat_y {\cal S}_k^{(\cdot,q)}(y,z):\nabla\phi(y)\, dy=0,\quad\forall \phi\in C^{\infty}_0(\mathcal{W}_k\backslash\mathcal{R}_k,
    \RR^3), \quad \forall q\in\{1,2,3\},
\end{equation}
\begin{equation}\label{solz}
\int_{\mathcal{W}_k}\CC  \nabhat_z {\cal
S}_k^{(p,\cdot)}(y,z):\nabla\phi(z)\, dz=0,\quad\forall \phi\in
C^{\infty}_0(\mathcal{W}_k\backslash\mathcal{R}_k, \RR^3), \quad
\forall p\in\{1,2,3\}.
\end{equation}
\end{prop}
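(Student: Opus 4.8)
The plan is to prove \eqref{soly} in detail; the companion identity \eqref{solz} for $\mathcal S_k^{(p,\cdot)}$ in the variable $z$ follows by the same argument after interchanging the roles of $(G,\CC,y)$ and $(\overline G,\overline\CC,z)$. Fix $z$ and $q$ and write $w(y):=\mathcal S_k^{(\cdot,q)}(y,z)=\sum_p \mathcal S_k^{(p,q)}(y,z)\,e_p$. The guiding idea is that $w$ is a superposition over $x\in\mathcal U_k$ of vector fields obtained by differentiating $y\mapsto G(x,y)$ in the pole parameter; each of these solves the homogeneous Lam\'e system in $y$ thanks to the symmetry relation \eqref{symgreen}, and, since the Lam\'e operator in $y$ is linear with coefficients independent of $x$, this property survives the $x$-integration. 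Introducing the symmetric matrix field $A^q(x,z):=(\CC-\overline\CC)(x)\,\nabhat_x\big[\overline G(x,z)e_q\big]$ and using the major symmetry of $\CC-\overline\CC$, one rewrites $\mathcal S_k^{(p,q)}(y,z)=\int_{\mathcal U_k}\sum_{i,l}A^q_{il}(x,z)\,\partial_{x_l}G_{ip}(x,y)\,dx$, so that the integrand defining $w$ is $\sum_{i,l}A^q_{il}(x,z)\,v^{il}(y)$, where $v^{il}(y):=\partial_{x_l}\big(G(x,y)^Te_i\big)$ has components $(v^{il})_p=\partial_{x_l}G_{ip}(x,y)$.

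First I would record the geometric fact that for every compact $K\subset\mathcal W_k\setminus\mathcal R_k$ one has $d_0:=\mathrm{dist}(K,\mathcal U_k)>0$. Indeed $\mathcal R_k$ retains the whole portion of $\partial D_k$ facing $\mathcal U_k$ (in particular $\Sigma'_{k+1}$ and all of $\partial\mathcal W_k\cap\partial\mathcal U_k$), since the only boundary pieces subtracted in forming $\mathcal R_k$ are the interior interfaces $\Sigma'_1,\dots,\Sigma'_k$. Hence for $y\in K$ and $x\in\mathcal U_k$ we have $|x-y|\ge d_0$, so the pole $x=y$ of $G(\cdot,y)$ is never met in the integral. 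Because $z\in\overline{\mathcal W_k}$ also stays away from $\mathcal U_k$, the energy bound \eqref{green1} applied to $\overline G(\cdot,z)$ gives $A^q(\cdot,z)\in L^2(\mathcal U_k)$, while interior regularity for the Lam\'e system yields, for $x\in\mathcal U_k$, bounds on $\nabla^m_y\nabla_x G(x,\cdot)$ over $K$ that are uniform in $x$ (with constants depending on $d_0$). This legitimizes differentiation under the integral sign in $y$ and the use of Fubini's theorem below, and it gives at once $w\in H^1_{loc}(\mathcal W_k\setminus\mathcal R_k,\RR^3)$.

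The core step is to show that each $v^{il}$ is a weak solution of the homogeneous Lam\'e system in $\mathcal W_k\setminus\mathcal R_k$. By \eqref{symgreen}, $G(x,y)^Te_i=G(y,x)e_i$, and by \eqref{green-2} the field $y\mapsto G(y,x)e_i=G(\cdot,x)e_i$ is an $H^1$ weak solution of $\mathrm{div}_y(\CC\,\nabhat_y\,\cdot\,)=0$ in $\mathcal W_k$, the pole $y=x\in\mathcal U_k$ lying outside; the test space $C^\infty_0(\mathcal W_k\setminus\mathcal R_k)$ encodes both the equation in each subdomain and the transmission conditions across the interior interfaces $\Sigma'_1,\dots,\Sigma'_k$ inherited from the global transmission solution $G$. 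Since $\CC(y)$ does not depend on the parameter $x$, differentiating the identity $\int_{\mathcal W_k}\CC\,\nabhat_y\big[G(y,x)e_i\big]:\nabla\phi\,dy=0$ in $x_l$ (the differentiation being justified as above) gives
\[
\int_{\mathcal W_k}\CC\,\nabhat_y v^{il}(y):\nabla\phi(y)\,dy=0,\qquad\forall\,\phi\in C^\infty_0(\mathcal W_k\setminus\mathcal R_k,\RR^3),\ \ \forall\,x\in\mathcal U_k.
\]

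Finally I would assemble the pieces: fixing $\phi\in C^\infty_0(\mathcal W_k\setminus\mathcal R_k,\RR^3)$, differentiation under the integral sign and Fubini give
\[
\int_{\mathcal W_k}\CC\,\nabhat_y w(y):\nabla\phi(y)\,dy=\int_{\mathcal U_k}\sum_{i,l}A^q_{il}(x,z)\left(\int_{\mathcal W_k}\CC\,\nabhat_y v^{il}(y):\nabla\phi(y)\,dy\right)dx=0,
\]
which is exactly \eqref{soly}. I expect the only genuinely delicate point to be the behaviour across the $C^{1,\alpha}$ interfaces $\Sigma'_j$: one must know that $G(\cdot,y)$ is a bona fide $H^1$ transmission solution there, regular enough in the pole parameter for the parameter differentiation to preserve the transmission conditions. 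This is precisely where the regularity theory for the biphase Green's function collected in Proposition \ref{Green} (ultimately \cite{AdCMR}) enters; away from the interfaces the computation reduces to the standard differentiation-under-the-integral argument of \cite[Proposition 4.4]{BFV}.
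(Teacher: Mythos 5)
Your proposal is correct and follows essentially the same route as the paper, which gives no details of its own and simply defers to \cite[Proposition 4.4]{BFV}: the symmetry \eqref{symgreen}, the positive distance between the poles in $\mathcal{W}_k\setminus\mathcal{R}_k$ and the integration region $\mathcal{U}_k$, and differentiation under the integral sign combined with Fubini are exactly the ingredients of that argument. One cosmetic remark: since $x\mapsto G(y,x)$ is only piecewise smooth across the interfaces interior to $\mathcal{U}_k$ (and pointwise gradient bounds may degrade near $\partial\Omega_0$), the pole-differentiation step should be phrased for a.e.\ $x$ with an $L^2$-in-$x$ majorant rather than with bounds ``uniform in $x$'', which is all the Fubini step requires.
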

\begin{proof}[Proof of Theorem \ref{teo2.1}]
Let
 \[
 E:=\|\CC-\overline{\CC}\|_{L^{\infty}(\Omega_0)}
 \]
 \[
 \epsilon:=r_0\left\|\Lambda_{\CC}^\Sigma-\Lambda_{\overline{\CC}}^\Sigma\right\|_\star
 \]
and let
\[
K_0:=\left \{ x\in D_0 | \textit{dist}(x,\der\Omega)\geq
\frac{r_0}{24} \right \}.
\]
By H\"older inequality and (\ref{green1}) we have
\begin{equation}\label{aprioriest}
    |\mathcal{S}_k^{(p,q)}(y,z)|\leq
    \frac{CE}{\sqrt{d(y)d(z)}},\quad\forall \ p,q=1,2,3,
\end{equation}
where $d(y)=d(y,\mathcal{U}_k)$, $d(z)=d(z,\mathcal{U}_k)$ and
$C>0$ only depends on $\alpha_0$,$\beta_0$, $A$, $N$, $\alpha$ and
$L$.

By Alessandrini's identity \eqref{AlessId} applied to
$u_1(\cdot)=G(\cdot,y)l$ and $u_2(\cdot)=\overline{G}(\cdot,z)m$, for
$y,z\in K_0$ and for $l,m\in\mathbb{R}^3$ with $|l|=|m|=1$, we get
\begin{equation}
    \label{small}
    \left|\int_{\om_0}(\CC-\overline{\CC})(x)\nabhat G(x,y)\,l:\nabhat\overline{G}(x,z)\,m\, dx\right| \leq \frac{C}{r_0}\varepsilon,
\end{equation}
where $C>0$ depends on the a priori data only.\\
We now proceed iteratively with respect to the index $k$. \\
{\bf First step: $k=0$.}\\
For $y,z\in K_0$, let us consider
\[
{\cal S}_0^{(p,q)}(y,z):=\int_{{\cal U}_0}(\CC-\overline{\CC})(x)\nabhat
G(x,y)e_p:\nabhat \overline{G}(x,z)e_q\, dx, \quad p,q \in \{1,2,3\}.
\]
{}From (\ref{small}) we get
\[
|{\cal S}_0^{(p,q)}(y,z)|\leq \frac{C}{r_0}\epsilon,\quad \forall
y,z\in K_0,
\]
where $C>0$ depends only on the a priori data. Let us fix $z\in K_0$ and $q\in\{1,2,3\}$.
Recalling that, for fixed $q\in\{1,2,3\}$, ${\cal S}^{(\cdot,q)}_0(\cdot ,z)$ is solution to
(\ref{soly}), we shall propagate the smallness with respect to the
first variable {}from the point $Q_1=P_1 + \frac{r_0}{6}L n_1$ to
$y_r=P_1+rn_1$, for every $r \in \left(0,
\frac{r_0}{24\sqrt{1+L^2}}\right]$, by iterating the three spheres
inequality \eqref{3s-2} over a chain of balls of decreasing
radius and contained in a suitable cone with vertex at $P_1$ and
axis in the direction $n_1$, obtaining
\begin{equation}
    \label{holderest}
    |{\cal S}^{(\cdot,q)}_0(y_r,z)|\leq \frac{CE}{\sqrt{rr_0}}\left(\frac{\epsilon}{E}\right)^{\delta^{\eta}},
\end{equation}
where
\begin{equation}
   \label{eta_def}
y_r=P_1+rn_1, \ \eta=b\left|\ln \left(a\frac{r}{r_0}\right)\right|+1, \quad  r
\in \left(0, \frac{r_0}{24\sqrt{1+L^2}}\right],
\end{equation}
$\delta\in (0,1)$ only depends on $\alpha_0$, $\beta_0$, and $a>0,\ b>0$ only
depend on $L$.

Now, for fixed $p\in\{1,2,3\}$, let us consider a solution ${\cal
S}^{(p,\cdot)}_0(y_r,\cdot)$ to (\ref{solz}). Then, a similar
procedure leads to
\begin{equation}\label{holderest2}
    |{\cal S}^{(p, \cdot)}_0(y_r,z_{ \overline{r}})|\leq
    \frac{CE}{\sqrt{\overline{r}r}}\left(\frac{\epsilon}{E}\right)^{\delta^{\eta+\overline{\eta}}},
\end{equation}
where
\[
z_{\overline{r}}=P_1+\overline{r}n_1,
\quad\overline\eta=b\left|\ln a\frac{\overline{r}}{r_0}\right|+1, \quad \overline{r} \in \left(0, \frac{r_0}{24\sqrt{1+L^2}}\right].
\]
Hence, for every $r, \overline{r} \in \left(0, \frac{r_0}{24\sqrt{1+L^2}}\right]$
\begin{equation}\label{Small1}
|{\cal S}^{(p, q)}_0(y_r,z_{\overline{r}})|\leq
\frac{CE}{\sqrt{\overline{r}r}}\left(\frac{\epsilon}{E}\right)^{\delta^{\eta+\overline{\eta}}},
\quad \forall \  p,q=1,2,3.
\end{equation}

Let $\overline{r}=cr$, with $c\in \left[\frac{2}{3}, \frac{4}{5}\right]$, $p=q=3$
and let $r_1=\frac{r_0}{12\sqrt{1+L^2}}$. Let us write

\begin{equation}\label{split}
{\cal S}_0(y_r,z_{\overline{r}})\,e_3\cdot e_3=I_1+I_2,
\end{equation}
where
\begin{equation}\label{I1}
    I_1= \int_{B_{r_1}\cap D_{1}}\!\!\!\!\!\!(\CC-\overline{\CC})(x)\nabhat G(x,y_r) \,e_3:\nabhat\overline{G}(x,z_{\overline{r}})\,e_3\,dx
\end{equation}
\begin{equation}\label{I2}
I_2= \int_{\Omega\backslash (B_{r_1}\cap
D_{1})}\!\!\!\!\!\!(\CC-\overline{\CC})(x)\nabhat G(x,y_r)
\,e_3:\nabhat\overline{G}(x,z_{\overline{r}})\,e_3\,dx \ .
\end{equation}
Here and in the sequel, $B_{r_1}$ denotes $B_{r_1}(P_1)$. Then,
{}from (\ref{green1}) we have
\begin{equation}\label{Small4}
|I_2|\leq \frac{CE}{r_0}.
\end{equation}
{}From (\ref{Small1}), (\ref{split}) and (\ref{Small4}) we derive
\begin{equation}\label{Bound1}
    |I_1|\leq CE\left( \frac{1}{r}\left(\frac{\epsilon}{E}\right)^{\delta^{\eta+\overline{\eta}}}+ \frac{1}{r_0}\right).
\end{equation}
We rewrite $I_1$ as follows
\begin{equation}\label{decomp}
I_1=\int_{B_{r_1}\cap D_{1}}\!\!\!\!\!\!(\CC_1-\overline{\CC}_1)\nabhat
\Gamma'(x,y_r) \,e_3:\nabhat \overline{\Gamma}'(x,z_{\overline{r}})\,e_3\,dx+A_1+A_2+A_3,
\end{equation}
where $\Gamma'$, $\overline{\Gamma'}$ is the normalized
fundamental solution to (\ref{norm_fund_sol}) corresponding to the pair of elasticity tensors
$(\CC=\CC_0,\CC^D=\CC_1)$, $(\CC=\overline{\CC}_0,\CC^D=\overline{\CC}_1)$ respectively, $D=D_1$ and
\[
A_1=\int_{B_{r_1}\cap D_{1}}\!\!\!\!\!\!(\CC_1-\overline{\CC}_1)(\nabhat
G-\nabhat\Gamma')(x,y_r) \,e_3:(\nabhat\overline{G}-\nabhat\overline{\Gamma}')(x,z_{\overline{r}})\,e_3\,dx \ ,
\]
\[
A_2=\int_{B_{r_1}\cap D_{1}}\!\!\!\!\!\!(\CC_1-\overline{\CC}_1)(\nabhat
G-\nabhat\Gamma')(x,y_r) \,e_3:\nabhat\overline{\Gamma}'(x,z_{\overline{r}
})\,e_3\,dx \ ,
\]
\[
A_3=\int_{B_{r_1}\cap
D_{1}}\!\!\!\!\!\!(\CC_1-\overline{\CC}_1)\nabhat\Gamma'(x,y_r)
\,e_3:(\nabhat\overline{G}-\nabhat\overline{\Gamma}')(x,z_{\overline{r}})\,e_3\,dx \
.
\]
By \eqref{energy} and (\ref{bound3}) we obtain
\begin{equation}\label{Bound2}
|A_1|\leq \frac{CE}{r_0} ,
\end{equation}
\begin{equation}\label{Bound3}
|A_2|, |A_3|\leq \frac{CE}{\sqrt{rr_0}} ,
\end{equation}
where $C>0$ only depends on the a priori data. {}From (\ref{decomp})-(\ref{Bound3}) we get
\begin{equation}\label{Bound4}
|I_1|\geq\left| \int_{B_{r_1}\cap
D_{1}}\!\!\!\!\!\!(\CC_1-\overline{\CC}_1)\nabhat \Gamma'(x,y_r)
\,e_3:\nabhat\overline{\Gamma}'(x,z_{\overline{r}
})\,e_3\,dx\right|-\frac{CE}{\sqrt{rr_0}} ,
\end{equation}
where $C>0$ only depends on the a priori data. {}From (\ref{Bound1}) and (\ref{Bound4}) we
obtain
\begin{equation}
    \label{Bound3bis}
    \left| \int_{B_{r_1}\cap D_{1}}\!\!\!\!\!\!(\CC_1-\overline{\CC}_1)\nabhat \Gamma'(x,y_r) \,e_3:\nabhat\overline{\Gamma}'(x,z_{\overline{r}})\,e_3\,dx\right|\leq
    \frac{CE}{r}\left( \left(\frac{\epsilon}{E}\right)^{\delta^{\eta+\overline{\eta}}}+ \left ( \frac{r}{r_0}  \right )^{\frac{1}{2}}
    \right)\ ,
\end{equation}
where $C>0$ only depends on the a priori data.

Let us denote by $\Gamma$ and $\overline{\Gamma}$ the Rongved fundamental
solutions corresponding to the tensors
$\CC_0\chi_{\mathbb{R}^3_+}+\CC_1\chi_{\mathbb{R}^3_-}$ and
$\overline{\CC}_0\chi_{\mathbb{R}^3_+}+\overline{\CC}_1\chi_{\mathbb{R}^3_-}$,
respectively. Let
\begin{equation}
    \label{decomp2}
    \int_{B_{r_1}\cap D_{1}}\!\!\!\!\!\!(\CC_1-\overline{\CC}_1)\nabhat \Gamma'(x,y_r) \,e_3:\nabhat\overline{\Gamma}'(x,z_{\overline{r}
    })\,e_3\,dx=:B_1+B_2+B_3\ ,
\end{equation}
where
\[
B_1=\int_{B_{r_1}\cap D_{1}}\!\!\!\!\!\!(\CC_1-\overline{\CC}_1)\nabhat
\Gamma(x,y_r) \,e_3:\nabhat\overline{\Gamma}(x,z_{\overline{r}})\,e_3\,dx \ ,
\]
\[
B_2=\int_{B_{r_1}\cap D_{1}}\!\!\!\!\!\!(\CC_1-\overline{\CC}_1)\nabhat
(\Gamma'-\Gamma)(x,y_r) \,e_3:\nabhat\overline{\Gamma}(x,z_{\overline{r}
})\,e_3\,dx \ ,
\]
\[
B_3=\int_{B_{r_1}\cap
D_{1}}\!\!\!\!\!\!(\CC_1-\overline{\CC}_1)\nabhat\Gamma'(x,y_{
r})\,e_3:(\nabhat\overline{\Gamma}'-\nabhat\overline{\Gamma})(x,z_{\overline{r}})
\,e_3\,dx \ .
\]
To estimate $B_2$ and $B_3$, we observe that
\[
B_{r_1}\cap D_{1}\subset \left(D_1\cap
Q^-_{\frac{r_0}{12\sqrt{1+L^2}},L}\right)\cup\left\{(x',x_3): 0\leq
x_3\leq \frac{L}{r_0^{\alpha}}|x'|^{1+\alpha}\right\} \ .
\]
In $D_1 \cap Q^-_{ \frac{r_0}{ 12 \sqrt{1+L^2}  }}$ we can apply
the asymptotic estimate \eqref{asymp2} so that, recalling also
\eqref{bound3}, we have
\[
|B_2| \leq \frac{CE}{r_0^\gamma} \int_{\RR_{-}^3} |x-y_r|^{\gamma
-2} |x -z_{\overline{r}}|^{-2} dx +  CE \int_{0\leq x_3 \leq
\frac{L}{r_0^\alpha}|x'|^{1+\alpha}}
|x-y_r|^{-2}|x-z_{\overline{r}}|^{-2} dx \ ,
\]
where $\gamma = \frac{\alpha^2}{3\alpha+2} < \frac{1}{2}$ and
$C>0$ depend only on $\alpha_0$, $\beta_0$, $L$ and $\alpha$. The
first integral can be easily estimated by passing to cylindrical
coordinates and by applying H\"{o}lder inequality, obtaining
\[
    \frac{CE}{r_0^\gamma} \int_{\RR_{-}^3} |x-y_r|^{\gamma -2} |x
    -z_{\overline{r}}|^{-2} dx  \leq \frac{CE}{r_0} \left (
    \frac{r}{r_0} \right )^{\gamma -1} \ .
\]
The estimate of the second integral is not straightforward. First,
by performing the change of variables $y= \frac{x}{r}$, we have
\begin{multline*}
    CE \int_{0\leq x_3 \leq \frac{L}{r_0^\alpha}|x'|^{1+\alpha}}
    |x-y_r|^{-2}|x-z_{\overline{r}}|^{-2} dx \leq
    \\
    \leq \frac{CE}{r} \int_{\RR^2} \left ( \int_0^{
    \frac{L}{r_0^\alpha}r^\alpha |y'|^{1+\alpha}} \frac{1}{ (|y'|^2
    +(y_3-1)^2)(|y'|^2 +(y_3 -c)^2)} dy_3 \right ) dy_1 dy_2 .
\end{multline*}
By splitting $\RR^2$ as the union of $A= \left\{ y' \in \RR^2 | \ |y'|
\geq \left ( \frac{2L}{r_0^\alpha} \right )^{-
\frac{1}{1+\alpha}}r^{ - \frac{\alpha}{1+\alpha}} \right\}$ and $B= \RR^2\setminus A $, we have
\begin{equation*}
    CE \int_{0\leq x_3 \leq \frac{L}{r_0^\alpha}|x'|^{1+\alpha}}
    |x-y_r|^{-2}|x-z_{\overline{r}}|^{-2} dx \leq
    \frac{CE}{r_0} \left(\left ( \frac{r}{r_0} \right )^{ \frac{\alpha -1 }{\alpha
    +1}}+ \left ( \frac{r}{r_0} \right )^{ \alpha -1}\right),
\end{equation*}
with $C>0$ only depending on $\alpha_0$, $\beta_0$, $L$ and
$\alpha$, where we have used the fact that $|y_3| < \frac{1}{2}$
in $B$, so that $|y_3 - 1| > \frac{1}{2}$ and $|y_3 -c| \geq c-
\frac{1}{2} \geq \frac{1}{6}$.

The term $B_3$ in \eqref{decomp2} can be estimated similarly,
obtaining
\begin{equation}\label{Bound5}
    |B_2|,|B_3|\leq
    \frac{CE}{r_0}\left(\left(\frac{r}{r_0}\right)^{\gamma-1}+\left(\frac{r}{r_0}\right)^{\alpha-1}\right),
\end{equation}
where $C>0$ only depends on $\alpha_0$, $\beta_0$, $L$, $\alpha$,
and $\gamma= \frac{\alpha^2}{3\alpha + 2 } <\frac{1}{2}$.

We split $B_1$ as follows
\[
B_1=C_1+C_2+C_3 ,
\]
where
\[
C_1=\int_{B^-_{r_1}}\!\!\!\!\!\!(\CC_1-\overline{\CC}_1)\nabhat
\Gamma(x,y_r) \,e_3:\nabhat \overline{\Gamma}(x,z_{\overline{r}})\,e_3\,dx \ ,
\]
\[
C_2=\int_{B^+_{r_1}\cap D_1}\!\!\!\!\!\!(\CC_1-\overline{\CC}_1)\nabhat
\Gamma(x,y_r) \,e_3:\nabhat\overline{\Gamma}(x,z_{\overline{r}})\,e_3\,dx \ ,
\]
\[
C_3=\int_{B^-_{r_1}\backslash
D_1}\!\!\!\!\!\!(\CC_1-\overline{\CC}_1)\nabhat \Gamma(x,y_r)
\,e_3:\nabhat\overline{\Gamma}(x,z_{\overline{r}})\,e_3\,dx \ .
\]
Since
\[
B_{r_1}^+\cap D_{1}\subset \left\{(x',x_3)| 0\leq x_3\leq
\frac{L}{r_0^{\alpha}}|x'|^{1+\alpha}\right\}
\]
and
\[
B^-_{r_1}\backslash D_1\subset \left\{(x',x_3)|
-\frac{L}{r_0^{\alpha}}|x'|^{1+\alpha}\leq x_3\leq 0\right\},
\]
we can estimate the terms $C_2$ and $C_3$ similarly to the second
addend of $B_2$, getting
\begin{equation}
    \label{Bound6}
    |C_2|,|C_3|\leq
    \frac{CE}{r_0}\left(\frac{r}{r_0}\right)^{\alpha-1},
\end{equation}
where $C>0$ only depends on $\alpha_0$, $\beta_0$, $L$, $\alpha$.

Finally, to estimate $C_1$, we use the following property of the
Rongved fundamental solution
\[
\Gamma(\xi,y_0)=h\Gamma(h\xi,hy_0),
\quad\overline{\Gamma}(\xi,y_0)=h\overline{\Gamma}(h\xi,hy_0), \quad \forall
\xi\neq y_0, \ \forall h>0.
\]
Then
\begin{equation}\label{changevariable}
C_1=\frac{1}{r}\int_{B^-_{\frac{r_1}{r}}}\!\!\!\!\!\!(\CC_1-\overline{\CC}_1)(x)\nabhat
\Gamma(x,e_3) \,e_3:\nabhat\overline{\Gamma}(x,ce_3)\,e_3\,dx
\end{equation}
and by (\ref{Bound3bis})--(\ref{Bound6}) we obtain
\begin{equation}\label{Bound7}
\left|\int_{B^-_{\frac{r_1}{r}}}\!\!\!\!\!\!(\CC_1-\overline{\CC}_1)(x)\nabhat \Gamma(x,e_3) \,e_3:\nabhat\overline{\Gamma}(x,ce_3)\,e_3\,dx\right|
\leq
CE \left ( \left(\frac{r}{r_0}\right)^{\gamma}+\left(\frac{\epsilon}{E}\right)^{\delta^{\eta+\overline{\eta}}}\right)
\ ,
\end{equation}
where $C>0$ only depends on the a priori data.
{}From \eqref{bound3} and since $c\in \left[\frac{2}{3}, \frac{4}{5}\right]$
and $r_1=\frac{r_0}{12\sqrt{1+L^2}}$, we derive
\begin{equation}\label{Bound8}
\left|\int_{\mathbb{R}_-^3\backslash {B^-_{\frac{r_1}{r}}}}\!\!\!\!\!\!(\CC_1-\overline{\CC}_1)(x)\nabhat \Gamma(x,e_3) \,e_3:\nabhat\overline{\Gamma}(x,ce_3)\,e_3\,dx\right|\\
\leq CE\frac{r}{r_0} \ ,
\end{equation}
where $C>0$ only depends on $\alpha_0$, $\beta_0$, $L$, $\alpha$.
{}From (\ref{Bound7}), (\ref{Bound8}), since $\gamma<\alpha$,
$\gamma<\frac{1}{2}$ and $c\geq \frac{2}{3}$, by (\ref{holderest})
we have
\begin{equation}\label{Bound9}
\left|\int_{\mathbb{R}_-^3}\!\!\!\!\!\!(\CC_1-\overline{\CC}_1)(x)\nabhat \Gamma(x,e_3) \,e_3:\nabhat\overline{\Gamma}(x,ce_3)\,e_3\,dx\right|\\
\leq CEf\left(\frac{r}{r_0}\right), \quad \forall r\leq
\frac{r_0}{24\sqrt{1+L^2}},
\end{equation}
where
\[
f(\rho)=\rho^{\gamma}+\left(\frac{\epsilon}{E}\right)^{\delta^{2+A|\ln
B\rho|}} \ ,
\]
where $0 < \rho\leq \frac{1}{24\sqrt{1+L^2}}$, and $A$, $B>0$ only depend on $L$.

By an appropriate choice of $\rho=\rho(\epsilon)$, we get
\begin{equation}\label{Bound9bis}
    \left|\int_{\mathbb{R}_-^3}\!\!\!\!\!\!(\CC_1-\overline{\CC}_1)(x)\nabhat \Gamma(x,e_3) \,e_3:\nabhat\overline{\Gamma}(x,ce_3)\,e_3\,dx\right|\\
    \leq
    CE\left|\ln\frac{\epsilon}{E}\right|^{\frac{-\gamma}{2A|\ln\delta|}}\
    ,
\end{equation}
where $C>0$ only depends on $\alpha_0$, $\beta_0$, $L$, $\alpha$.
Applying Proposition 3.2 of \cite{BFV} we have
\begin{equation*}
    \int_{\mathbb{R}_-^3}\!\!\!\!\!\!(\CC_1-\overline{\CC}_1)(x)\nabhat \Gamma(x,e_3) \,e_3:\nabhat\overline{\Gamma}(x,ce_3)\,e_3\,dx
    =
    (\Gamma(e_3,ce_3)-\overline{\Gamma}(e_3,ce_3))e_3 \cdot e_3
\end{equation*}
and then
\begin{equation}
    \label{Bound10}
    \left|\Gamma(e_3,ce_3)-\overline{\Gamma}(e_3,ce_3)\right|\leq
    CE\left|\ln\frac{\epsilon}{E}\right|^{\frac{-\gamma}{2A|\ln\delta|}}\
    ,
\end{equation}
where $C>0$ only depends on the a priori data.
Now, by using the explicit form of the Rongved fundamental
solution and proceeding as in \cite{BFV} (Section 4.2), it can be
shown that \eqref{Bound10} implies
\begin{equation}
    \label{Bound11}
    \|\CC_1-\overline{\CC}_1\|_\infty \leq CE\omega_1\left(\frac{\epsilon}{E}\right),
\end{equation}
where
\[
\omega_1(t)=|\ln t|^{\frac{-\gamma}{2A|\ln\delta|}}
\]
and $C>0$ only depends on the a priori data.

If  $\|\CC-\overline{\CC}\|_{\infty}=\|\CC_1-\overline{\CC}_1\|_\infty$, then we
get
\[
\|\CC-\overline{\CC}\|_{\infty}=E\leq
\frac{\epsilon}{\omega_1^{-1}(\frac{1}{C})} \ ,
\]
and the claim follows. Otherwise, we proceed with the next step. \\
{\bf Second step: $k=1$}

In this case, let us consider the function
\begin{multline*}
{\cal S}_1^{(p,q)}(y,z)=\int_{\Omega}(\CC-\overline{\CC})(x)\nabhat G(x,y)e_p:\nabhat \overline{G}(x,z)e_q\, dx-\\
-\int_{D_1}(\CC-\overline{\CC})(x)\nabhat G(x,y)e_p:\nabhat \overline{G}(x,z)e_q\, dx,\quad p,q=1,2 ,3.
\end{multline*}
{}From (\ref{green1}), (\ref{small}) and (\ref{Bound11}) we get
\begin{equation}\label{Bound12}
|{\cal S}_1^{(p,q)}(y,z)|\leq
\frac{CE}{r_0}\omega_1\left(\frac{\epsilon}{E}\right),\quad \forall\ y,z\in K_0,
\end{equation}
where $C>0$ only depends on the a priori data.
Proceeding similarly to what previously described for ${\cal S}_0^{(\cdot,q)}(\cdot,z)$, and by regularity estimates  for elliptic systems, we derive that for every $P\in\Sigma'_1$ and every $r$, $0<r\leq \frac{r_0}{24\sqrt{1+L^2}}$,
\begin{equation}\label{holderest3}
|{\cal S}_1^{(p,q)}(y'_r,z)|+r_0 |\nabla{\cal S}_1^{(p,q)}(y'_r,z)|\leq \frac{CE}{r_0}\left(\omega_1\left(\frac{\epsilon}{E}\right)\right)^{\delta^{\eta}},\quad\forall z\in K_0,
\end{equation}
where $y'_r=P+rn_1$ and $\eta$ is the same constant defined in (\ref{eta_def}).
Let us recall the following regularity estimates due to Li and Nirenberg \cite{LN}
\begin{equation}\label{Bound13}
\|\nabla{\cal S}_1^{(p,q)}(\cdot,z)\|_{L^{\infty}( \overline{D}_0\cap Q_{\frac{r_0}{3},L} )}+r_0^{\beta}\left|\nabla{\cal S}_1^{(p,q)}(\cdot,z)\right|_{\beta, \overline{\Gamma}_0\cap Q_{\frac{r_0}{3},L} }\leq \frac{CE}{r^2_0},
\end{equation}
where $\beta=\frac{\alpha}{2(1+\alpha)}$ and $C>0$ only depends on the a priori data.

{}From (\ref{holderest3}) and (\ref{Bound13}) we have that, for every $P\in \Sigma'_1$, $z\in K_0$ and $r\in \left(0, \frac{r_0}{24\sqrt{1+L^2}}\right]$,

\begin{equation}\label{Bound14}
|{\cal S}_1^{(p,q)}(P,z)|+r_0|\nabla{\cal S}_1^{(p,q)}(P,z)|\leq \frac{CE}{r_0}\left(\left(\frac{r}{r_0}\right)^{\beta}+\left(\omega_1\left(\frac{\epsilon}{E}\right)\right)^{\delta^{\eta}}\right),
\end{equation}
where $C$ only depends on the a priori data.
By a suitable choice of $r=r(\epsilon)$, we get
\begin{equation}\label{Bound16}
|{\cal S}_1^{(p,q)}(P,z)|+r_0|\nabla{\cal S}_1^{(p,q)}(P,z)|\leq  \frac{CE}{r_0}\widetilde{\omega}_2\left(\frac{\epsilon}{E}\right),
\end{equation}
where
\[
\widetilde{\omega}_2(t) =\left|\ln |\ln\left(t\right)|\right|^{\frac{-\gamma}{2A|\ln\delta|}}.
\]
By the transmission conditions
\[
{\cal S}_1^{(p,\cdot)}(\cdot,z)|_{D_0} = {\cal S}_1^{(p,\cdot)}(\cdot,z)|_{D_1}, \quad
\CC^0\nabla {\cal S}_1^{(p,\cdot)}(\cdot,z)n_1|_{D_0} =
\CC^1\nabla {\cal S}_1^{(p,\cdot)}(\cdot,z)n_1|_{D_1}, \quad \mbox{on }\Sigma_1,
\]
and by (\ref{Bound16}) we have, for every $p\in\{1,2,3\}$,
\begin{equation}\label{Bound17}
\|{\cal S}_1^{(p,\cdot)}(\cdot,z)|_{D_1}\|_{H^{\frac{1}{2}}(\Sigma_1')}+r_0\|\nabla{\cal S}_1^{(p,\cdot)}(\cdot,z)|_{D_1}\|_{H^{-\frac{1}{2}}(\Sigma_1')}\leq \frac{CE}{r_0}\tilde{\omega}_2\left(\frac{\epsilon}{E}\right),
\end{equation}
where $C$ only depends on the a priori data.

We can adapt the arguments in \cite{ARRV}  (see in particular Lemma 6.1 and Theorem 6.2) to obtain the following stability estimate for the Cauchy problem for ${\cal S}_1^{(p,\cdot)}(\cdot,z)$ in
$Q_{\frac{r_0}{3}L}(P_1)\cap D_1$
\begin{equation}\label{Bound18}
\|{\cal S}_1^{(p,q)}(\cdot,z)|\|_{L^{\infty}(B_{\overline{\rho}}(R_1))}\leq \frac{CE}{r_0}\left(\tilde{\omega}_2\left(\frac{\epsilon}{E}\right)\right)^{\xi},
\end{equation}
where $R_1=P_1-dn_1$, $d=
\frac{r_0}{3}\frac{L\sqrt{1+L^2}}{1+\sqrt{1+L^2}}$, $\overline{\rho} =\frac{r_0}{12}\frac{L}{1+\sqrt{1+L^2}}$, and the constants $\xi\in (0,1)$ and $C>0$ only depend on the a priori data.
Observe that by Proposition 5.5 in \cite{ARRV} there exists $h_1>0$, with $\frac{h_1}{r_0}$ depending only on $L$, such that
\[
(D_1)_h =\{x\in D_1\ |\ d(x,\partial D_1)>h\}\textrm{ is connected },\forall h\leq h_1,
\]
Let $ \overline{h}=\min\left\{h_1,\frac{r_0}{3}\frac{L\sqrt{1+L^2}}{1+\sqrt{1+L^2}}\right\}$. Then, $\frac{\overline{h}}{r_0}$ depends only on $L$, $\overline{(D_1)}_{\overline{h}}$ is connected and contains the points $R_1$ and $Q_2=P_2+\frac{8}{5}\overline{h}\sqrt{1+L^2}n_2$, with $P_2\in \Sigma_2$ as in \textbf{($A_1$)}. Let $\gamma$ be an arc contained in $\overline{(D_1)}_{\overline{h} }$ connecting $R_1$ with $Q_2$.
By iterating the three spheres
inequality \eqref{3s-2} first over a chain of balls with centers on $\gamma$ and then over a chain of balls of decreasing radius and contained in a suitable cone with vertex at $P_2$ and
axis in the direction $n_2$, we obtain
\[
|{\cal S}_1^{(p,q)}(y_r,z)|\leq \frac{CE}{\sqrt{rr_0}}\left(\tilde{\omega}_2\left(\frac{\epsilon}{E}\right)\right)^{\tau\delta^{\eta}},\quad\forall z\in K_0, \forall\ p,q\in\{1,2,3\},
\]
where $y_r=P_2+rn_2$, $\tau\in(0,1)$ and $C>0$ only depend on the a priori data. Repeating the estimating procedure for the function ${\cal S}_1^{(\cdot,q)}(y_r,\cdot)$  we obtain
\[
|{\cal S}_1^{(p,q)}(y_r,z_{\overline{r}})|\leq \frac{CE}{r}\left(\omega^*_2\left(\frac{\epsilon}{E}\right)\right)^{\tau\delta^{\eta +\overline{\eta}}},
\quad \forall\ p,q\in\{1,2,3\},
\]
where $C$ only depend on the a priori data and
\[
\omega^*_2(t)=\left|\ln\left|\ln\left|\ln\left(\frac{\epsilon}{E}\right)\right|\right|\right|^{\frac{-\gamma}{2A|\ln\delta|}}.
\]
Now, choosing a coordinate system centered at $P_2$, with $e_3=n_2$, and denoting by $\Gamma$ and $\overline{\Gamma}$ the Rongved solutions corresponding to the tensors $\CC_1\chi_{\mathbb{R}^3_+}+\CC_2\chi_{\mathbb{R}^3_-}$ and $\overline{\CC}_1\chi_{\mathbb{R}^3_+}+\overline{\CC}_2\chi_{\mathbb{R}^3_-}$, we get that
\[
|(\Gamma(e_3,ce_3)-\overline{\Gamma}(e_3,ce_3))e_3\cdot e_3|\leq CE \omega_2\left(\frac{\epsilon}{E}\right),
\]
where $C$ only depend on the a priori data and
\[
\omega_2(t)=\left|\ln\left|\ln\left|\ln\left|\ln\left(\frac{\epsilon}{E}\right)\right|\right|\right|\right|^{\frac{-\gamma}{2A|\ln\delta|}},
\]
so that, proceeding as in \cite{BFV}, we have
\[
\|\CC_2-\overline{\CC}_2\|\leq CE \omega_2\left(\frac{\epsilon}{E}\right).
\]
If $E=\|\CC_2-\overline{\CC}_2\|_\infty$, then
\[
\|\CC-\overline{\CC}\|_{\infty}=E\leq \frac{\epsilon}{\omega_2^{-1}(\frac{1}{C})}
\]
and the claim follows.
Otherwise, we proceed similarly iterating the procedure up to $k=j$ obtaining the desired result.
\end{proof}


\bibliographystyle{plain}

\end{document}